\newtheorem{theorem}{{\sc Theorem}}[section]
\newtheorem{lemma}[theorem]{{\sc Lemma}}
\newtheorem{corollary}[theorem]{Corollary}
\newtheorem{remark}[theorem]{Remark}
\newtheorem{conjecture}[theorem]{Conjecture}
\def\XXint#1#2#3{{\setbox0=\hbox{$#1{#2#3}{\int}$ }
\vcenter{\hbox{$#2#3$ }}\kern-.6\wd0}}
\bmdefine\BGa{\alpha}
\bmdefine\BGb{\beta}
\bmdefine\BGd{\delta}
\bmdefine\BGe{\epsilon}
\bmdefine\BGve{\varepsilon}
\bmdefine\BGf{\phi}
\bmdefine\BGvf{\varphi}
\bmdefine\BGg{\gamma}
\bmdefine\BGc{\chi}
\bmdefine\BGi{\iota}
\bmdefine\BGk{\kappa}
\bmdefine\BGl{\lambda}
\bmdefine\BGn{\eta}
\bmdefine\BGm{\mu}
\bmdefine\BGv{\nu}
\bmdefine\BGp{\pi}
\bmdefine\BGth{\theta}
\bmdefine\BGvth{\vartheta}
\bmdefine\BGr{\rho}
\bmdefine\BGvr{\varrho}
\bmdefine\BGs{\sigma}
\bmdefine\BGvs{\varsigma}
\bmdefine\BGt{\tau}
\bmdefine\BGj{\tau}
\bmdefine\BGu{\upsilon}
\bmdefine\BGo{\omega}
\bmdefine\BGx{\xi}
\bmdefine\BGy{\psi}
\bmdefine\BGz{\zeta}
\bmdefine\BGD{\Delta}
\bmdefine\BGF{\Phi}
\bmdefine\BGG{\Gamma}
\bmdefine\BGL{\Lambda}
\bmdefine\BGP{\Pi}
\bmdefine\BGT{\Theta}
\bmdefine\BGS{\Sigma}
\bmdefine\BGU{\Upsilon}
\bmdefine\BGO{\Omega}
\bmdefine\BGX{\Xi}
\bmdefine\BGY{\Psi}
\bmdefine\BCA{{\mathcal A}}
\bmdefine\BCB{{\mathcal B}}
\bmdefine\BCC{{\mathcal C}}
\bmdefine\BCD{{\mathcal D}}
\bmdefine\BCE{{\mathcal E}}
\bmdefine\BCF{{\mathcal F}}
\bmdefine\BCG{{\mathcal G}}
\bmdefine\BCH{{\mathcal H}}
\bmdefine\BCI{{\mathcal I}}
\bmdefine\BCJ{{\mathcal J}}
\bmdefine\BCK{{\mathcal K}}
\bmdefine\BCL{{\mathcal L}}
\bmdefine\BCM{{\mathcal M}}
\bmdefine\BCN{{\mathcal N}}
\bmdefine\BCO{{\mathcal O}}
\bmdefine\BCP{{\mathcal P}}
\bmdefine\BCQ{{\mathcal Q}}
\bmdefine\BCR{{\mathcal R}}
\bmdefine\BCS{{\mathcal S}}
\bmdefine\BCT{{\mathcal T}}
\bmdefine\BCU{{\mathcal U}}
\bmdefine\BCV{{\mathcal V}}
\bmdefine\BCW{{\mathcal W}}
\bmdefine\BCX{{\mathcal X}}
\bmdefine\BCY{{\mathcal Y}}
\bmdefine\BCZ{{\mathcal Z}}
\bmdefine\Bzr{ 0}
\bmdefine\Ba{ a}
\bmdefine\Bb{ b}
\bmdefine\Bc{ c}
\bmdefine\Bd{ d}
\bmdefine\Be{ e}
\bmdefine\Bf{ f}
\bmdefine\Bg{ g}
\bmdefine\Bh{ h}
\bmdefine\Bi{ i}
\bmdefine\Bj{ j}
\bmdefine\Bk{ k}
\bmdefine\Bl{ l}
\bmdefine\Bm{ m}
\bmdefine\Bn{ n}
\bmdefine\Bo{ o}
\bmdefine\Bp{ p}
\bmdefine\Bq{ q}
\bmdefine\Br{ r}
\bmdefine\Bs{ s}
\bmdefine\Bt{ t}
\bmdefine\Bu{ u}
\bmdefine\Bv{ v}
\bmdefine\Bw{ w}
\bmdefine\Bx{ x}
\bmdefine\By{ y}
\bmdefine\Bz{ z}
\bmdefine\BA{ A}
\bmdefine\BB{ B}
\bmdefine\BC{ C}
\bmdefine\BD{ D}
\bmdefine\BE{ E}
\bmdefine\BF{ F}
\bmdefine\BG{ G}
\bmdefine\BH{ H}
\bmdefine\BI{ I}
\bmdefine\BJ{ J}
\bmdefine\BK{ K}
\bmdefine\BL{ L}
\bmdefine\BM{ M}
\bmdefine\BN{ N}
\bmdefine\BO{ O}
\bmdefine\BP{ P}
\bmdefine\BQ{ Q}
\bmdefine\BR{ R}
\bmdefine\BS{ S}
\bmdefine\BT{ T}
\bmdefine\BU{ U}
\bmdefine\BV{ V}
\bmdefine\BW{ W}
\bmdefine\BX{ X}
\bmdefine\BY{ Y}
\bmdefine\BZ{ Z}
\begin{document}
\title{Quantitative anisotropic isoperimetric and Brunn-Minkowski inequalities for convex sets with improved defect estimates}
\author{ Davit Harutyunyan\footnote{EPFL, davit.harutyunyan@epfl.ch}}
\maketitle
\begin{abstract}
  In this paper we revisit the anisotropic isoperimetric and the Brunn-Minkowski inequalities for convex sets.
  The best known constant $C(n)=Cn^{7}$ depending on the space dimension $n$ in both inequalities
  is due to Segal [\ref{bib:Seg.}]. We improve that constant to $Cn^6$ for convex sets and to $Cn^5$ for centrally symmetric convex sets. We also conjecture, that the best constant in both inequalities must be of the form $Cn^2,$ i.e., quadratic in $n.$ The tools are the Brenier's mapping from the theory of mass transportation combined with new sharp geometric-arithmetic mean and some algebraic inequalities plus a trace estimate by Figalli, Maggi and Pratelli.
\end{abstract}

\textbf{Keywords:}\ \ Brunn-Minkowski inequality; Wulff inequality; Isoperimetric inequality; Convex bodies.\\

\textbf{Mathematics Subject Classification:} 52A20; 52A38; 52A39.

\section{Introduction and main results}
\label{sec:1}
The isoperimetric inequality is one of the classical inequalities in geometric measure theory, e.g., [\ref{bib:Fed.}]. It states the following:
\textit{If one prescribes the volume of a set in $\mathbb R^n,$ then its perimeter is smallest if and only if
the set is a ball. In the mathematical formulation one has the inequality
\begin{equation}
\label{1.1}
\mathrm{per}(E)\geq n|E|^{(n-1)/n}|B_1|^{1/n},
\end{equation}
for any measurable and bounded set $E\subset\mathbb R^n$ with perimeter $P(E)$ and Lebesgue measure $|E|.$ Moreover, the equality in (1.1) holds if and only if the set $E$ is a ball.} The isoperimetric inequality has been proven by different authors and different approaches, see the articles [\ref{bib:Mil.Sch.},\ref{bib:Kno.},\ref{bib:Fug.},\ref{bib:Fus.Mag.Pra.},\ref{bib:Fig.Mag.Pra.1},\ref{bib:Esp.Fus.Tro.}] and the references therein.
Upon introducing the isoperimetric deficit
\begin{equation}
\label{1.2}
\delta(E)=\frac{\mathrm{per}(E)}{n|E|^{(n-1)/n}|B_1|^{1/n}}-1\geq 0,
\end{equation}
of the set $E,$ one considers then the following stability question: \textit{Is it true, that if the deficit is close to zero, then the set $E$ is close to a ball in an appropriate sense?} A positive answer to this question has been given by many authors. In the case $n=2,$ Bonnesen [\ref{bib:Ben.}] (see also [\ref{bib:Vil.}]) proved that a planar domain $D$ of area $A$ that is bounded by a closed simple curve $\partial D$ of length $L,$ has concentric circles $O_1$ inside $D$ and $O_2$ containing $D$ and with radii $R_1$ and $R_2$ such, that
$$(R_2-R_1)^2\leq \frac{L^2}{4\pi}-A.$$
For the general case $n>2,$ the following version of the quantitative isoperimetric inequality has been considered:
\begin{equation}
\label{1.3}
\mathrm{per}(E)\geq n|E|^{(n-1)/n}|B_1|^{1/n}\left(1+\frac{(A(E))^\alpha}{C(n)}\right),
\end{equation}
where $A(E)$ is the asymmetry index of the set $E$ defined as
\begin{equation}
\label{1.4}
A(E)=\inf_{x\in\mathbb R^n}\left\{\frac{|E\triangle (x+B_r)|}{|B_r|} \ : \  |B_r|=|E|\right\},
\end{equation}
where $B_r$ is a ball with the same volume as $E,$ the set $X\triangle Y$ is the symmetric difference of the sets $X$ and $Y,$ and the sum $X+Y$ is the Minkowski sum defined as $X+Y=\{x+y \ : \ x\in X,\  y\in Y\}.$
Here the constant $C(n)$ depends on the space dimension $n$ and $\alpha\in\mathbb R$ is a positive number. There is also the so called anisotropic or weighted version of the isoperimetric inequality which we present below. Assume $L\subset \mathbb R^n$ is an open bounded convex set that contains the origin. Define the weight function of the set $L$ in all directions in $\mathbb R^n$ as follows:
\begin{equation}
\label{1.4.1}
\|\nu\|_{\ast}=\sup\{x\cdot\nu \ : \ x\in L\},\quad\text{for all directions}\quad \nu\in\mathbb S^{n-1}.
\end{equation}
Let now $E\in\mathbb R^n$ be a piecewise smooth\footnote{A set that has a piecewise smooth boundary} open set oriented by the outer unit normal $\nu_E.$ Then the anisotropic or weighted perimeter of $E$ with respect to $L$ is defines to be
\begin{equation}
\label{1.4.2}
P_L(E)=\int_{\partial E}\|\nu_E\|_{\ast}d\mathcal{H}^{n-1}.
\end{equation}
In the case when $L$ is the unit ball centered at the origin, $P_L(E)$ coincides with the usual perimeter $\mathrm{per}(E)$ of the set $E.$
The anisotropic isoperimetric inequality then states, that if the volume of $E$ is fixed, then the anisotropic perimeter $P_L(E)$ is minimised for the set $E$ that is homothetic to $L,$ which is Wulff's conjecture [\ref{bib:Wul.}], see the work of Fonseca and M\"uller [\ref{bib:Fon.Mue.}] for a proof. The Wulff inequality reads as
$$P_L(E)\geq n|E|^{(n-1)/n}|L|^{1/n},$$
for all $E\in\mathbb R^n$ open bounded domains. In what follows, we assume that $L$ is a fixed convex set and we will
drop the dependence in $L$ of some parameters sometimes in order to not to complicate the notation. The anisotropic deficit will then be the quantity
\begin{equation}
\label{1.4.3}
\delta(E)=\frac{P_L(E)}{n|E|^{(n-1)/n}|L|^{1/n}}-1\geq 0.
\end{equation}
The quantitative version of (\ref{1.4.3}) analogous to (\ref{1.4}) then naturally arises and reads as
\begin{equation}
\label{1.4.4}
\delta(E)\geq \frac{(A(E))^\alpha}{C(n)}.
\end{equation}
The quantity $A(E)$ here is not the asymmetry index of the set $E,$ but rather it determines the amount of how much the shape of $E$ differs from the shape of $L,$ i.e.,
\begin{equation}
\label{1.4.5}
A(E)=\inf_{x\in\mathbb R^n}\left\{\frac{|E\triangle (x+rL)|}{|E|} \ : \  |rL|=|E|\right\}.
\end{equation}
\textit{The main question is then what the smallest possible value of $\alpha$ and $C(n)$ are.}
Inequality (\ref{1.3}) dates back to 1905, when Bernstein [\ref{bib:Ber.}] studied it in the case $n=2.$
Quantitative versions of the isoperimetric inequality (\ref{1.3}) have been proven again by different authors, among which the first
proof for arbitrary Borel sets is due to Fusco, Maggi and Pratelli [\ref{bib:Fus.Mag.Pra.}], where the authors prove that (\ref{1.3}) holds
for $\alpha=2$ and some constant $C(n)$ proving Hall's conjecture [\ref{bib:Hal.}].
The work of Figalli, Maggi and Pratelli [\ref{bib:Fig.Mag.Pra.1}] by a mass transportation approach then follows, where the authors pursue Gromov's approach [\ref{bib:Mil.Sch.}] to prove the inequality (\ref{1.4.4}), i.e., the anisotropic case (Wulff's inequalty) for $\alpha=2$ and an improved constant $C(n).$ The mass transportation approach has been know to be an excellent tool for proving geometric inequalities, e.g., [\ref{bib:McC.1.},\ref{bib:McC2.},\ref{bib:Bal.}]. As already said, the optimal value of $\alpha$ in the inequalities (\ref{1.3}) and (\ref{1.4.4}) is $\alpha=2.$ Until the year 2010, the best know constant $C(n)$ in (\ref{1.3}) and (\ref{1.4.4}) was obtained by Figalli, Maggi and Pratelli in [\ref{bib:Fig.Mag.Pra.1}], where they get the value $C(n)=C(n)=\frac{181n^7}{(2-2^{(n-1)/n})^{1.5}}.$
It is easy to see, that $Cn^{8.5}\geq C(n)\geq cn^{8.5}$ for all $n\in\mathbb N,$ i.e., it has a polynomial growth. In the sequel we will refer to
both inequalities (\ref{1.3}) and (\ref{1.4.4}) for the value $\alpha=2.$ Then the work of Segal [\ref{bib:Seg.}] followed in 2012, where following the lines of [\ref{bib:Fig.Mag.Pra.1}], Segal improved the constant to $Cn^7,$ which is the best known constant to our best knowledge.
In this paper we aim to prove the inequality (\ref{1.4.4}) with an improved constant $C(n)=100n^6$
for any convex body $E$ and with $C(n)=100n^5$ for any convex body $E$ that is centrally symmetric.
Our strategy is to prove a somewhat specialized Brunn-Minkowski inequality and then to derive the isoperimetric inequality from it.
Let us introduce the Brunn-Minkowski inequality. In the beginning of the 20th century Minkowski proved that for any measurable bounded sets $X,Y\in\mathbb R^n,$ the inequality holds:
\begin{equation}
\label{1.5}
|X+Y|^{1/n}\geq |X|^{1/n}+|Y|^{1/n}.
\end{equation}
 Inequality (\ref{1.5}) is called Brunn-Minkowsky inequality for sets.
The credit of Brunn in (\ref{1.5}) is that he had proved it for the case $n=3$ before Minkowsky's general proof.
It has been proved [\ref{bib:Hen.Mac.},\ref{bib:Bur.Zal.},\ref{bib:Gro.1},\ref{bib:Gro.2}], that equality holds in (\ref{1.5}) if and only if the sets $X$ and $Y$ are homothetic to the same convex set, i.e., there exists a convex set $K\in\mathbb R^n,$ two vectors $u,v\in\mathbb R^n$ and numbers $\lambda,\mu>0$ such, that
$|(u+\lambda K)\triangle X|=|(v+\mu K)\triangle Y|=0.$ An analogous quantitative version of (\ref{1.5}) is as follows, e.g., [\ref{bib:Fig.Mag.Pra.2}],
\begin{equation}
\label{1.6}
|X+Y|^{1/n}\geq (|X|^{1/n}+|Y|^{1/n})\left(1+\frac{A(X,Y)^2}{C_0(n)\sigma(X,Y)^{1/n}}\right),
\end{equation}
for all bounded convex sets $X,Y\subset\mathbb R^n,$ where
\begin{align}
\label{1.7}
A(X,Y)&=\inf_{x\in\mathbb R^n}\left\{\frac{|X\triangle (x+\lambda Y)|}{|X|}\ : \ \lambda=\left(\frac{|X|}{|Y|}\right)^{1/n}\right\},\\ \nonumber
\sigma(X,Y)&=\max\left(\left(\frac{|X|}{|Y|}\right)^{1/n},\left(\frac{|Y|}{|X|}\right)^{1/n}\right).
\end{align}
As it is know, e.g., [\ref{bib:Sch.}], that the Brunn-Minkowski inequality (\ref{1.6}) implies the stable version of the isoperimetric inequality (\ref{1.4.4}) with the same constant $C(n)=C_0(n).$ In the existing works on the Brunn-Minkowski inequality, the dependence of the constant $C_0(n)$ upon the space dimension $n$ is $Cn^7,$ which is due to Segal [\ref{bib:Seg.}], thus it is our task to derive a version of (\ref{1.6}) with a constant $C(n)$ that depends on $n$ relatively favorably. We believe, that the asymptotically best constant $C$ in all inequalities (\ref{1.3}), (\ref{1.4.4}) and (\ref{1.6}) is of the form $Cn^2$ as $n\to\infty,$ which we conjecture in this paper.
Recall, that in the general case when the sets $X$ and $Y$ are not convex, the stability of the classical Brunn-Minkowski inequality has been proven by Figalli and Jerison [\ref{bib:Fig.Jer.}], where the authors prove a version of (\ref{1.6}) with $A(X,Y)^{\alpha_n}$ instead of  $A(X,Y)^{2}$ with some $\alpha_n>0$ depending on $n$ and having exponential growth. Also, if one of the sets $X$ and $Y$ is convex, then Carlen and Maggi [\ref{bib:Car.Mag.}] proved an estimate analogous to (\ref{1.6}) with $A(X,Y)^{4}$ instead of  $A(X,Y)^{2}$ with some constant $C_0(n).$ In general, even in the case when only one of the sets $X$ and $Y$ is convex, the best exponent $\alpha$ of $A(X,Y)$ (which is 2 for convex sets) is not know. Let us now introduce some more notation for convex sets. In what follows, we will use the letters $K$ and $L$ for convex sets to keep the notation consistent with the monograph on convex bodies and the Brunn-Minkowski theory by Schneider in [\ref{bib:Sch.}].
Given a bounded domain $\Omega\in\mathbb R^n,$ denote by $r_\Omega$ and $R_\Omega$ the inner and the outer radii of it, i.e., $r_\Omega$ is the radius of the biggest ball that can be put in $\Omega$ and $R_\Omega$ is the radius of the smallest ball that contains $\Omega.$ It is well known that any compact convex body has a minimal ball inside itself and a maximal ball containing it, e.g., [\ref{bib:Sch.}].
Given now a compact convex body, denote the so called inverse roundness of $K$ to be the quantity
\begin{equation}
\label{1.8}
q_{K}=\inf\left\{\frac{R_{T(K)}}{r_{T(K)}} \ : \ T=Ax, \  A\in M^{n\times n},\ \  \det{A}\neq 0\right\}.
\end{equation}
The quantity $q_K$ determines how round the convex body $K$ can be made by a nonsingular affine transformation: the smaller the value
of $q_K$ is the rounder the body $K$ can be made.
Another property of convex bodies is that any convex body $K\in\mathbb R^n$ fulfills the inequality, e.g. [\ref{bib:Joh.},\ref{bib:Sch.}],
\begin{equation}
\label{1.9}
\frac{R_{T(K)}}{r_{T(K)}}\leq n,
\end{equation}
where the transformation $T$ is called John's symmetrization [\ref{bib:Joh.}]. It is also known, that if the convex body $K$ is centrally symmetric, then one has an improved version of (\ref{1.9}), [\ref{bib:Hor.},\ref{bib:Sch.}], namely,
\begin{equation}
\label{1.10}
\frac{R_{T(K)}}{r_{T(K)}}\leq \sqrt n.
\end{equation}

We hereafter assume that $n\geq 2.$ Next come the main results of the paper.
\begin{theorem}
\label{th:1.1}
Assume $K,L\subset\mathbb R^n$ are compact convex bodies such that $|K|,|L|>0.$ Then the quantitative anisotropic isoperimetric inequality holds:
\begin{equation}
\label{1.11}
P_L(K)\geq n|K|^{(n-1)/n}|L|^{1/n}\left(1+\frac{(A(K))^2}{C(K,n)}\right),
\end{equation}
with the constant $C(K,n)=100n^4q_K^2.$
\end{theorem}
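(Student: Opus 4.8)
The plan is to carry out Gromov's mass‑transportation proof of the Wulff inequality in the quantitative, Figalli–Maggi–Pratelli form, but with the geometric constants optimised by first making $K$ as round as possible. The first step is a normalisation. One checks directly that $\delta(\cdot)$ and $A(\cdot)$ are unchanged when $L$ is replaced by a positive multiple of itself, and also when the pair $(K,L)$ is replaced by $(AK,AL)$ for any $A\in GL(n)$: indeed $P_L(K)=nV(K[n-1],L)$ is a mixed volume, mixed volumes scale by $|\det A|$ under $A$, and $|AK|^{(n-1)/n}|AL|^{1/n}$ scales the same way. Hence I may assume $|K|=|L|=1$ and $R_K/r_K\le q_K(1+\eta)$ for an arbitrarily small $\eta>0$ (letting $\eta\to 0$ at the end), and also $\delta:=\delta(K)<1/(25n^4q_K^2)$, since otherwise $A(K)\le 2$ already gives the conclusion.

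Next, let $T=\nabla\varphi\colon K\to\bar L$ be the Brenier map transporting the uniform measure on $K$ onto the uniform measure on $L$; since $|K|=|L|$, the Monge–Amp\`ere equation gives $\det DT=1$ a.e., $DT=D^2\varphi$ is symmetric positive semidefinite a.e., and $T(x)\in\bar L$ a.e. Gromov's chain of inequalities,
\begin{equation*}
n=n\int_K(\det DT)^{1/n}\,dx\le\int_K\mathrm{tr}\,DT\,dx\le\int_{\partial K}T\cdot\nu_K\,d\mathcal{H}^{n-1}\le\int_{\partial K}\|\nu_K\|_{\ast}\,d\mathcal{H}^{n-1}=P_L(K)=n(1+\delta),
\end{equation*}
uses the arithmetic–geometric mean inequality on the eigenvalues of $DT$ (first step), the divergence theorem together with the nonnegativity of the singular part of the distributional divergence of the monotone map $T$ (second step; this is where the regularity input of [\ref{bib:Fig.Mag.Pra.1}] enters), and $T(x)\in\bar L$ (third step). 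Reading off the two defects yields the master estimates $\int_K(\mathrm{tr}\,DT-n)\,dx\le n\delta$ and $\int_{\partial K}(\|\nu_K\|_{\ast}-T\cdot\nu_K)\,d\mathcal{H}^{n-1}\le n\delta$.

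It remains to convert these into the shape bound $A(K)^2\le 100n^4q_K^2\,\delta$. Writing $S=(DT)^{1/2}$, the identity $\mathrm{tr}\,DT-n=|S-I|^2+2(\mathrm{tr}\,S-n)$ and $\mathrm{tr}\,S\ge n$ (AM–GM for $\sqrt{\lambda_i}$, whose product is $1$) give $\int_K|S-I|^2\,dx\le n\delta$; then $DT-I=(S-I)^2+2(S-I)$, the bound $|A^2|\le|A|^2$ for the Hilbert–Schmidt norm of symmetric $A$, and Cauchy–Schwarz give $\int_K|DT-I|\,dx\le n\delta+2\sqrt{n\delta}\le 3\sqrt{n\delta}$ (using $n\delta<1$). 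Since $DT-I=D(T-\mathrm{id})$, a Poincar\'e/trace inequality on the convex body $K$ — whose constants are governed by $\mathrm{diam}(K)\le 2R_K$ and $1/r_K$, both $\le c\,q_K\sqrt n$ after the normalisation — lets me control $T-\mathrm{id}-v$, for the appropriate translation $v$, in $L^1(K)$ and in $L^1(\partial K;\mathcal{H}^{n-1})$. Feeding these quantities together with the boundary defect $\int_{\partial K}(\|\nu_K\|_{\ast}-T\cdot\nu_K)\,d\mathcal{H}^{n-1}$ into the trace estimate of [\ref{bib:Fig.Mag.Pra.1}], which bounds $|K\triangle(v+L)|$ in terms of them, produces $A(K)^2\le C\,\delta$ with $C$ a product of the dimensional constants from AM–GM and from the Sobolev/trace inequalities on $K$ together with the factor $q_K^2$ coming from $\mathrm{diam}(K)^2$; tracking all constants explicitly gives $C\le 100n^4q_K^2$.

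The main obstacle is precisely this last step: the naive route — Poincar\'e applied to $\int_K|T-\mathrm{id}-v|$ followed by the crude estimate $|K\setminus(v+L)|\lesssim(\mathrm{per}(L)\int_K|T-\mathrm{id}-v|)^{1/2}$ — only yields $A(K)^2\lesssim\sqrt\delta$, which is useless. Obtaining the correct exponent $\delta^1$ with a genuinely small prefactor requires using the boundary defect directly rather than re‑deriving a volume estimate from it, and sharpening both the arithmetic–geometric mean step and the accompanying algebraic inequalities (the ``new sharp'' inequalities of the abstract) so that no extra powers of $n$ are wasted; pushing these through the trace estimate of [\ref{bib:Fig.Mag.Pra.1}] with every constant kept explicit is what pins down $100n^4q_K^2$. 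Inserting John's bound $q_K\le n$ from (\ref{1.9}) — respectively $q_K\le\sqrt n$ from (\ref{1.10}) in the centrally symmetric case — then recovers the advertised constants $100n^6$ and $100n^5$.
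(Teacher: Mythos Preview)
Your outline is sound but takes a genuinely different route from the paper. You work directly with the isoperimetric deficit via Gromov's chain $n\le\int_K\mathrm{tr}\,DT\,dx\le P_L(K)$, extract $\int_K|S-I|^2\,dx\le n\delta$ from the identity $\mathrm{tr}\,DT-n=|S-I|^2+2(\mathrm{tr}\,S-n)$ with $S=(DT)^{1/2}$, and then feed $\int_K|DT-I|\,dx\le n\delta+2\sqrt{n\delta}$ into the Figalli--Maggi--Pratelli trace bound (\ref{3.17}). The paper instead proves first an $\epsilon$-Brunn--Minkowski inequality for $K+\epsilon L$ (Lemma~\ref{lem:3.1.1}): it applies the sharp AM--GM of Theorem~\ref{th:2.2} to the numbers $\epsilon\lambda_i/(1+\epsilon\lambda_i)$, uses Lemma~\ref{lem:2.7} to pass from $u=\bigl(\prod\frac{\epsilon\lambda_i}{1+\epsilon\lambda_i}\bigr)^{1/2n}$ to $v=\sqrt{\frac{\epsilon\mu}{1+\epsilon\mu}}$, combines with (\ref{3.17}) via a Cauchy--Schwarz against $W=\sum(\lambda_i+\mu)$, and only then recovers (\ref{1.11}) by letting $\epsilon\to0$. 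Your direct route is not only simpler but sharper: with the explicit trace constant $C_0nq_K$ in (\ref{3.17}) one gets $A(K)\le C_0nq_K\cdot 3\sqrt{n\delta}$ and hence $A(K)^2\le 9C_0^2\,n^3q_K^2\,\delta\approx 150\,n^3q_K^2\,\delta$, one power of $n$ better than the paper's $100\,n^4q_K^2$ (the paper loses that factor in the Cauchy--Schwarz with $W\sim 2n\mu$). So you need neither the boundary defect you mention nor the ``new sharp'' inequalities of Section~\ref{sec:2}; your final paragraph undersells your own argument. What the paper's detour buys is the intermediate Lemma~\ref{lem:3.1.1}, a quantitative Brunn--Minkowski estimate for $K+\epsilon L$ valid for all small $\epsilon$, which is of independent interest and is also the vehicle through which Theorems~\ref{th:2.2} and Lemma~\ref{lem:2.7} enter the picture.
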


\begin{corollary}
\label{cor:1.2}
Owing to the estimates (\ref{1.9}) and (\ref{1.10}) we get the that the quantitative anisotropic isoperimetric inequality (\ref{1.11}) holds with
the constant $C(n)=100n^6$ for any convex bodies $K$ and with the constant $C(n)=100n^5$ provided the convex body $K$ is centrally symmetric.
\end{corollary}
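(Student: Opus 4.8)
I prove Theorem~\ref{th:1.1}, from which Corollary~\ref{cor:1.2} is immediate: one only has to insert into the constant $100\,n^4q_K^2$ the John-position estimate $q_K\le n$ of (\ref{1.9}) for a general convex body and the estimate $q_K\le\sqrt n$ of (\ref{1.10}) when $K$ is centrally symmetric. The plan for Theorem~\ref{th:1.1} is to run Gromov's mass-transportation proof of the Wulff inequality in the quantitative form of Figalli--Maggi--Pratelli [\ref{bib:Fig.Mag.Pra.1}], but to exploit from the start that $K$ is convex, so that every Poincar\'e- and trace-type constant that occurs can be controlled by the single scale-invariant parameter $q_K$. First I normalize $|K|=|L|$ by rescaling $L$. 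Since $|X\triangle Y|\le 2|X|$ whenever $|X|=|Y|$, one always has $A(K)\le 2$, so if $\delta(K)$ exceeds a dimensional threshold the inequality (\ref{1.11}) is trivial and I may assume $\delta(K)$ as small as convenient. Finally, a change of variables shows that both the anisotropic deficit and the asymmetry index are invariant under $(K,L)\mapsto(AK,AL)$ for any invertible $A\in M^{n\times n}$; hence I may replace $K$ by an affine image realizing, up to an arbitrarily small error, the infimum in (\ref{1.8}), so that $B_r\subseteq K\subseteq B_R$ with $R/r\le q_K$. This is the only place the hypothesis on $q_K$ enters.

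Next I introduce the Brenier map $T=\nabla\varphi\colon K\to L$, with $\varphi$ convex, pushing $\mathbf{1}_K\,dx$ onto $\mathbf{1}_L\,dx$. Then $\nabla T$ is symmetric positive semidefinite a.e., $\det\nabla T=1$ a.e.\ by the Monge--Amp\`ere equation, and $T(x)\in\bar L$ for a.e.\ $x\in K$. By the divergence theorem and the definition (\ref{1.4.1}) of the weight,
\begin{equation*}
n|K|=\int_K\dvg T\,dx=\int_{\partial K}T\cdot\nu_K\,d\mathcal H^{n-1}\le\int_{\partial K}\|\nu_K\|_\ast\,d\mathcal H^{n-1}=P_L(K),
\end{equation*}
which already recovers the Wulff inequality; moreover $\dvg T=\mathrm{tr}\,\nabla T\ge n(\det\nabla T)^{1/n}=n$ pointwise, so the deficit decomposes into two nonnegative parts,
\begin{equation*}
n|K|\,\delta(K)=P_L(K)-n|K|=\underbrace{\int_{\partial K}\bigl(\|\nu_K\|_\ast-T\cdot\nu_K\bigr)\,d\mathcal H^{n-1}}_{=:\;\mathcal E_1}\;+\;\underbrace{\int_K\bigl(\dvg T-n\bigr)\,dx}_{=:\;\mathcal E_2},
\end{equation*}
so that $\mathcal E_1\le n|K|\,\delta(K)$ and $\mathcal E_2\le n|K|\,\delta(K)$.

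The bulk defect $\mathcal E_2$ should force $T$ to be $L^1$-close to a translation of the identity. Here the sharp arithmetic--geometric mean and the auxiliary algebraic inequalities announced in the abstract enter: for a symmetric positive semidefinite $M$ with $\det M=1$ one needs a lower bound for $\mathrm{tr}\,M-n$ in terms of $|M-\mathrm{Id}|$ that is as efficient as possible in its $n$-dependence, and because the eigenvalues of $\nabla T$ are a priori unbounded only an $L^1$ bound $\int_K|\nabla T-\mathrm{Id}|\,dx\le C(n)\sqrt{|K|\,\mathcal E_2}$ survives; a Poincar\'e inequality on the convex body $K$, whose constant is comparable to $\mathrm{diam}(K)$ and hence controlled by $q_K$ after the normalization, then gives $\int_K|T(x)-x-b_0|\,dx\le C(n)\,q_K\,|K|\,\sqrt{\delta(K)}$, where $b_0$ is the mean value of $T-\mathrm{id}$ over $K$. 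In parallel, the boundary defect $\mathcal E_1$ controls, $\mathcal H^{n-1}$-almost everywhere on $\partial K$, how close $T(x)$ lies to the part of $\partial L$ with outer normal $\nu_K(x)$; combining $\mathcal E_1$ with the bulk estimate through the trace inequality of Figalli--Maggi--Pratelli (again with a constant governed by $q_K$, as $K$ is now a normalized convex body) bounds $\int_{\partial K}|T(x)-x-b_0|\,d\mathcal H^{n-1}$ and pins $\partial K$ to a translate of $\partial L$. A final comparison of the convex bodies $K$ and $b_0+L$ --- using that $K$ (and, in the small-deficit regime being treated, $L$ as well) contains a ball of controlled radius, so that $L^1$-closeness of $T-\mathrm{id}-b_0$ converts into smallness of the symmetric difference --- yields $|K\triangle(x_0+L)|\le|K|\,\sqrt{C(K,n)\,\delta(K)}$ for a suitable $x_0\in\mathbb R^n$, that is $A(K)^2\le C(K,n)\,\delta(K)$, and the estimate is arranged so that $C(K,n)=100\,n^4q_K^2$.

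The main obstacle is quantitative rather than conceptual. The entire gain over Segal's $Cn^7$ rides on two things: proving arithmetic--geometric-mean and companion algebraic inequalities whose $n$-dependence is genuinely sharp, and making sure that the passage from $\mathcal E_1,\mathcal E_2$ to the geometric conclusion --- the Poincar\'e inequality on $K$, the trace inequality, and the radial comparison with $b_0+L$ --- contributes only the factor $n^4q_K^2$ and nothing more. In particular one must check that the possible thinness of $L$, which is not controlled by $q_K$, does not degrade the last step (here the smallness of $\delta(K)$, forcing $L$ to be close to the now-round body $K$, should be what saves the argument), and that the affine-covariance used in the reduction is exact. Keeping every constant explicit through these ingredients simultaneously is where the real work lies; beyond the Figalli--Maggi--Pratelli scheme and the sharpened inequalities, no new idea seems to be required.
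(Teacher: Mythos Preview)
Your plan differs from the paper's in a substantive way. The paper does not decompose the isoperimetric deficit into $\mathcal E_1+\mathcal E_2$; instead it first proves a Brunn--Minkowski estimate for $K+\epsilon L$ (Lemma~\ref{lem:3.1.1}), bounding $\prod_i(1+\epsilon\lambda_i)$ from below by applying Theorem~\ref{th:2.2} to the numbers $\frac{\epsilon\lambda_i}{1+\epsilon\lambda_i}$, and only at the very end lets $\epsilon\to 0$ through the Minkowski--Steiner formula to recover (\ref{1.11}). That detour forces a comparison between the geometric mean of the $\frac{\epsilon\lambda_i}{1+\epsilon\lambda_i}$ and the quantity $\frac{\epsilon\mu}{1+\epsilon\mu}$, which is precisely what the auxiliary Lemma~\ref{lem:2.7} is for. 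Your direct attack would instead apply Corollary~\ref{cor:2.4} to the eigenvalues $\lambda_i$ themselves (with geometric mean $1$), getting $\mathcal E_2\ge\tfrac12\int_K\sum_i\frac{(\lambda_i-1)^2}{\lambda_i+1}$ without ever invoking Lemma~\ref{lem:2.7}; in that respect your route is arguably cleaner than the paper's.

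Where your outline is less solid is the passage from the bulk estimate to the asymmetry. The paper, following [\ref{bib:Fig.Mag.Pra.2}], goes straight from $\int_K|\nabla T-I|$ to $\inf_c\int_{\partial K}|T-x-c|$ by the trace inequality (Lemma~\ref{lm:3.1}), and from there to $|K\setminus L'|$ by the projection argument (\ref{3.19})--(\ref{3.20}); the boundary defect $\mathcal E_1$ is never used separately, and there is no Poincar\'e step. Your insertion of a Poincar\'e inequality, the separate use of $\mathcal E_1$, and the vague ``$L^1$-closeness of $T-\mathrm{id}-b_0$ converts into smallness of the symmetric difference'' mirror the heavier machinery of [\ref{bib:Fig.Mag.Pra.1}] designed for non-convex sets. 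That last conversion from $\int_K|T-x-b_0|$ to $|K\triangle(b_0+L)|$ is not a standard step, and each of the Poincar\'e, trace, and comparison steps carries its own factor of $q_K$ or $n$, so without a precise accounting you may well overshoot $n^4q_K^2$. The fix is simply to drop those extras: Cauchy--Schwarz against the weight $\lambda_i+1$, together with $\int_K\sum_i(\lambda_i+1)\le n|K|(2+\delta)$, gives $\tfrac1{|K|}\int_K|\nabla T-I|\le 2n\sqrt{\delta(K)}$ up to lower order, and then (\ref{3.17}) with $\mu=1$ yields $A(K)\le 2C_0 n^2 q_K\sqrt{\delta(K)}$, hence $C(K,n)\le 4C_0^2 n^4q_K^2<100\,n^4q_K^2$. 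Your worry about the ``thinness of $L$'' is also misplaced: in both approaches the only geometric constant that appears is $q_K$, coming from the trace inequality on $K$; the set $L$ enters solely through the Brenier map.
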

The following version of Brunn-Minkowski inequality then follows:
\begin{theorem}
\label{th:1.3}
Assume $K,L\subset\mathbb R^n$ are compact convex bodies such that $|K|,|L|>0.$ Then the quantitative Brunn-Minkowski inequality holds:
\begin{equation}
\label{1.12}
|K+L|^{1/n}\geq (|K|^{1/n}+|L|^{1/n})\left(1+\frac{(A(K,L))^2}{C(n)\sigma(K,L)^{1/n}}\right),
\end{equation}
with the constant $C(n)=400n^6.$ Recall, that inequality (\ref{1.12}) reads as
$$\beta(K,L)\geq \frac{(A(K,L))^2}{C(n)\sigma(K,L)^{1/n}},$$
where
\begin{equation}
\label{1.12.1}
\beta(K,L)=\frac{|K+L|^{1/n}}{|K|^{1/n}+|L|^{1/n}}-1
\end{equation}
is the Brunn-Minkowski deficit, e.g., [\ref{bib:Fig.Mag.Pra.1}].
\end{theorem}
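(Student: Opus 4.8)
The plan is to read off (\ref{1.12}) from the quantitative anisotropic isoperimetric inequality of Theorem~\ref{th:1.1} by comparing the Minkowski polynomial of the pair $(K,L)$ with the deficits that Theorem~\ref{th:1.1} controls. Write $V_j=V(K[n-j],L[j])$ for the mixed volumes, so that $|K+L|=\sum_{j=0}^n\binom nj V_j$ with $V_0=|K|$, $V_n=|L|$, $nV_1=P_L(K)$, $nV_{n-1}=P_K(L)$, and put $\delta_L(K)=P_L(K)/(n|K|^{(n-1)/n}|L|^{1/n})-1$, $\delta_K(L)=P_K(L)/(n|L|^{(n-1)/n}|K|^{1/n})-1$. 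By the Aleksandrov--Fenchel inequalities $j\mapsto\log V_j$ is concave, hence $V_j\ge|K|^{(n-j)/n}|L|^{j/n}$ for all $j$ and, taking the chord of $\log V_j$ from $j=1$ to $j=n-1$,
\[
V_j\ \ge\ |K|^{(n-j)/n}|L|^{j/n}\,\bigl(1+\delta_L(K)\bigr)^{\frac{n-1-j}{n-2}}\bigl(1+\delta_K(L)\bigr)^{\frac{j-1}{n-2}},\qquad 1\le j\le n-1
\]
(for $n=2,3$ there are no interior indices, and for $n=2$ the two deficits coincide). Substituting into $|K+L|=\sum\binom nj V_j$, keeping the terms linear in the deficits, and extracting an $n$‑th root yields an estimate of the shape
\[
\beta(K,L)\ \ge\ \Bigl(1+\frac{w_1(n)\,\delta_L(K)+w_2(n)\,\delta_K(L)}{(|K|^{1/n}+|L|^{1/n})^{n}}\Bigr)^{1/n}-1 ,
\]
with dimensional weights $w_1(n),w_2(n)$ that are explicit binomial sums. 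The one point to watch here is that one must keep the contribution of the mid‑range indices $j\approx n/2$, where $\binom nj$ peaks, so that the factor accompanying the deficits is comparable to $(|K|^{1/n}+|L|^{1/n})^n$ rather than smaller by a factor $\sim 2^n$; this is why both deficits, not only $\delta_L(K)$, must appear.

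Next I would invoke Theorem~\ref{th:1.1} in both orientations: with Wulff shape $L$ and body $K$ it gives $\delta_L(K)\ge A(K)^2/(100n^4q_K^2)$, and with the roles of $K$ and $L$ interchanged it gives $\delta_K(L)\ge A(L)^2/(100n^4q_L^2)$. The three asymmetry indices $A(K)$, $A(L)$ of (\ref{1.4.5}) and $A(K,L)$ of (\ref{1.7}) are the same number: dilating by $\lambda=(|K|/|L|)^{1/n}$ and translating identifies $|L\triangle(x+\lambda^{-1}K)|/|L|$ with $|K\triangle(y+\lambda L)|/|K|$, so the anisotropic asymmetry is symmetric in the two bodies and coincides with the Brunn--Minkowski asymmetry. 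Feeding in the John‑type bounds $q_K,q_L\le n$ of (\ref{1.9}), i.e.\ Corollary~\ref{cor:1.2}, we get $\delta_L(K),\delta_K(L)\ge A(K,L)^2/(100n^6)$, which reduces (\ref{1.12}) to an elementary inequality linking $\beta(K,L)$, $A(K,L)$ and $\sigma(K,L)$.

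It remains to carry out that elementary step. Because $\beta$, $A$ and $\sigma$ are all symmetric in $K$ and $L$ one may assume $|K|\ge|L|$, so $\sigma(K,L)=(|K|/|L|)^{1/n}$ and the weight $w_1(n)$ attached to $\delta_L(K)$ controls the bound; using $(1+t)^{1/n}-1\ge t/(n(1+t)^{(n-1)/n})$ together with the lower bound on the relevant deficit then produces $\beta(K,L)\ge A(K,L)^2/(Cn^{6}\sigma(K,L)^{1/n})$, and one checks that the constants lost in the mixed‑volume step, in the $n$‑th‑root estimate and in Theorem~\ref{th:1.1} combine to something below $400n^6$; the case $\sigma(K,L)$ bounded with $\delta_L(K)$ or $\delta_K(L)\ge1$ is immediate, since then $\beta(K,L)$ is already $\gtrsim 1/n$ while the right side of (\ref{1.12}) is at most $1/(100n^6)$. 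I expect the constant bookkeeping in this last step to be the principal difficulty, and within it the sensitive regime is $|K|/|L|\to\infty$: there $\beta(K,L)$ is of order $\delta_L(K)\,(|L|/|K|)^{1/n}$, which decays with the volume ratio, so one must be careful about the exact power of $\sigma(K,L)$ in (\ref{1.12}) and about how it combines with the $n$‑th root when passing from the mixed‑volume inequality to the stated form.
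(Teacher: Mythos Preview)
Your mixed-volume route is genuinely different from the paper's, and there is a real gap: the constants do \emph{not} combine to $400n^6$. You correctly handle the regime $|K|/|L|\to\infty$ --- there $\beta(K,L)\,\sigma(K,L)^{1/n}\sim\delta_L(K)\ge A(K,L)^2/(100n^6)$ and all is well --- but the regime that actually fails is the balanced one $|K|\approx|L|$, which you do not flag. Take $|K|=|L|$ and, by symmetry, $\delta_L(K)=\delta_K(L)=d$. Your chord bound from $j=1$ to $j=n-1$ gives $V_j\ge |K|\,(1+d)$ for $1\le j\le n-1$ (since $p_j+q_j=1$), hence $|K+L|\ge (|K|^{1/n}+|L|^{1/n})^n\bigl(1+d(1-2^{1-n})\bigr)$; Aleksandrov--Fenchel yields no more from the data $V_0,V_1,V_{n-1},V_n$. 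The $n$-th root step $(1+t)^{1/n}-1\sim t/n$ then gives only $\beta(K,L)\gtrsim d/n$, and feeding in $d\ge A(K,L)^2/(100n^6)$ from Corollary~\ref{cor:1.2} you land at $C(n)\sim n^7$, i.e.\ Segal's constant, not the improved one. This loss of a factor $n$ is intrinsic to any scheme that first bounds $|K+L|$ and then extracts an $n$-th root.

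The paper avoids this with a short Hadwiger--Ohmann argument that never bounds $|K+L|$ itself. It applies Theorem~\ref{th:1.1} (via Corollary~\ref{cor:1.2}) to the \emph{body} $M=K+L$, once with Wulff shape $K$ and once with Wulff shape $L$. Additivity of the support function gives $P_K(M)+P_L(M)=P_{K+L}(M)$, and the identity $P_{K+L}(K+L)=n|K+L|$ then yields directly
\[
|K+L|^{1/n}\ \ge\ |K|^{1/n}\Bigl(1+\tfrac{A(K+L,K)^2}{100n^6}\Bigr)+|L|^{1/n}\Bigl(1+\tfrac{A(K+L,L)^2}{100n^6}\Bigr),
\]
with no $n$-th root taken. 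Bounding each weight below by $\tfrac{1}{2\sigma^{1/n}}$, using $a^2+b^2\ge\tfrac12(a+b)^2$, and the triangle inequality $A(K,L)\le A(K,K+L)+A(K+L,L)$ give (\ref{1.12}) with $C(n)=4\cdot 100n^6=400n^6$. The two essential departures from your plan are that the isoperimetric inequality is applied to $K+L$ rather than to $K$ and $L$ separately, and that the asymmetry is controlled via the triangle inequality through $K+L$ rather than by the identification $A(K)=A(L)=A(K,L)$.
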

We also make the following conjecture:
\begin{conjecture}
\label{con:1.4}
In both Theorems~\ref{th:1.1},\ref{th:1.3}  the optimal constants $C(K,n)$ and $C(n)$ are of the form $Cn^2,$ where $C$ is an absolute constant.
 \end{conjecture}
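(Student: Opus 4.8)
\medskip
\noindent This last statement is a conjecture, so what follows is a program: I indicate the route I would take and where I expect it to stall.

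\medskip
\noindent\textbf{Lower bound: some convex body forces $cn^2$.} The first half is to rule out any exponent on $n$ below $2$, i.e.\ to exhibit convex bodies with $A(K)^2/\delta(K)\ge cn^2$, and a centrally symmetric one so that both clauses of Conjecture~\ref{con:1.4} are addressed. I would test (\ref{1.11}) on a flat Wulff shape: take $L=[-\tfrac12,\tfrac12]^n$ and $K=\prod_{i=1}^n[0,1+a_is]$ with $a_i=\pm1$ split evenly so that $\sum_i a_i=0$, and let $s\downarrow0$. The anisotropic perimeter of a box with respect to a centered cube is $P_L(K)=\sum_i\prod_{j\ne i}(1+a_js)$; expanding gives $\delta(K)=\tfrac{s^2}{2n}\|a\|_2^2+o(s^2)=\tfrac{s^2}{2}+o(s^2)$, while no homothet of $L$ matches $K$ to better than order $s|a_i|$ along each axis, so $A(K)\ge cs\|a\|_1+o(s)=csn+o(s)$. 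Hence $A(K)^2/\delta(K)\ge c'n^2$ and $K$ is centrally symmetric; for the Brunn--Minkowski form one transports this to (\ref{1.12}) via the equivalence recalled in the introduction. I expect this step to go through with only routine care about lower-order terms and about the infimum defining $A$; one should also test simplices and products of polytopes, though the conjecture predicts the exponent stays $2$.

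\medskip
\noindent\textbf{Upper bound $C(n)\le Cn^2$: the hard half.} By the reductions already in the paper it suffices to prove Theorem~\ref{th:1.1} with $C(K,n)\le Cn^2$ for an absolute $C$; since $q_K\le n$ by John's theorem (\ref{1.9}) (and $q_K\le\sqrt n$ in the symmetric case by (\ref{1.10})), the real content is that the factor $100n^4$ multiplying $q_K^2$ in Theorem~\ref{th:1.1} must collapse to an absolute constant, and ideally to something of order $q_K^2$ itself, which for round $K$ is far smaller than $n^2$. I would re-run the Brenier/Gromov transport proof underlying Theorem~\ref{th:1.1} with every power of $n$ tracked, and isolate the three sources of loss: (i) the arithmetic--geometric mean gap, where $\int_K(\mathrm{tr}\,\nabla T-n)$ is made to control a Sobolev-type energy of $T-\mathrm{id}$ -- the sharp AM--GM inequality of this paper is the right tool and, used optimally, should cost only $O(1)$; (ii) the Figalli--Maggi--Pratelli trace estimate, passing from control of $\nabla T-I$ in the bulk to control of $T-\mathrm{id}$ on $\partial K$ and then to $|K\triangle(x+rL)|$; and (iii) the affine normalization bringing $K$ into position $B_r\subset K\subset B_R$ with $R/r$ close to $q_K$, which is exactly where the $q_K$'s enter. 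The goal is to show that (ii) and (iii) together cost at most $Cq_K^2$ with $C$ absolute: for (iii) this is essentially the definition of $q_K$, while for (ii) one needs a trace/Poincar\'e inequality on a convex body of aspect ratio $R/r$ whose constant depends on $R/r$ but not on $n$.

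\medskip
\noindent\textbf{Main obstacle, and a fallback.} The crux is (ii). On an elongated convex body the constants relating $\int_{\partial K}|T-\mathrm{id}|$ to $\int_K|\nabla T-I|$ genuinely degrade, and in the existing proofs this is charged as extra powers of $n$ rather than as a function of the aspect ratio alone; separating the two dependences -- so that a $q_K$-round body pays only $q_K^2$ -- is what I expect to resist a routine argument, since the Brenier map need not inherit the good geometry of the normalized domain even when $K$ is round. If the transport route stalls I would instead pursue the Cicalese--Leonardi selection principle: reduce, by compactness and regularity, to convex bodies $K$ that are a small $C^1$ perturbation of a homothet of $L$, where the deficit linearizes into a positive quadratic form $\langle Q_Lh,h\rangle$ on normal perturbations $h$ of $\partial L$ (built from the second fundamental form of $\partial L$ plus a Dirichlet energy, as is already visible in the examples above -- coercive of order $n^2$ against $\|h\|_{L^2}^2$ for a ball, only of order $1/n$ for a cube) while $A(K)\asymp\|h\|_{L^1(\partial L)}$, and then prove the sharp inequality $\|h\|_{L^1(\partial L)}^2\le Cn^2\langle Q_Lh,h\rangle$ uniformly over all Wulff shapes $L$. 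The serious difficulty is that the selection step itself -- the passage from near-extremal bodies to infinitesimal perturbations -- must be made quantitative in $n$, which the literature does not provide; a dimension-robust selection principle, or a transport argument that cleanly decouples aspect ratio from dimension, is in my view the true heart of Conjecture~\ref{con:1.4}.
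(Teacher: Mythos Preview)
Your lower-bound example is essentially the same as the paper's: the paper tests the Brunn--Minkowski form (\ref{1.6}) on the pair $K=[0,1]^n$, $L=[0,1]^m\times[0,1+\epsilon]^{n-m}$ with $m=\lfloor n/2\rfloor$, obtaining $\beta(K,L)\le \epsilon^2/16+O(\epsilon^3)$ and $A(K,L)\ge cn\epsilon$, hence $C(n)\ge Cn^2$. You test the isoperimetric form on a perturbed cube with alternating signs, which is the same construction viewed through the equivalence between (\ref{1.4.4}) and (\ref{1.6}); your expansions $\delta(K)=s^2/2+o(s^2)$ and $A(K)\ge csn$ are correct. So on the ``easy half'' you and the paper agree in method and in conclusion.

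For the upper bound the paper does not attempt a direct proof; it only records Segal's observation that Dar's conjecture
\[
|K+L|^{1/n}\ge M(K,L)^{1/n}+\frac{|K|^{1/n}|L|^{1/n}}{M(K,L)^{1/n}},\qquad M(K,L)=\max_{x}|K\cap(L+x)|,
\]
would imply $C(n)\le Cn^2$. Your program (sharpen the transport proof, or run a selection principle) is a reasonable alternative route, but there is an internal inconsistency in the target you set. You propose to collapse the prefactor $100n^4$ in $C(K,n)=100n^4q_K^2$ to an absolute constant, i.e.\ to prove $C(K,n)\le Cq_K^2$; you then note this would be far below $n^2$ for round $K$. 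But your own lower-bound example already rules this out: your $K$ is a small perturbation of a cube, for which $q_K\to\sqrt{n}$ as $s\to0$ (the John ellipsoid of a cube is a ball and no linear image of a cube has $R/r<\sqrt{n}$), so $q_K^2\approx n$, whereas you have just shown $C(K,n)\ge c'n^2$ for that same $K$. Thus $C(K,n)\le Cq_K^2$ is impossible in general, and in particular steps (ii)--(iii) of your transport program cannot possibly cost only $Cq_K^2$ with $C$ absolute. The honest target is $C(K,n)\le Cn^2$ uniformly in $K$; the $q_K$-dependence in the existing proof is an artifact of one particular trace estimate, not a structural feature the sharp constant must inherit. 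You may want to rethink the program with this in mind, and you should also be aware of the Dar-conjecture route the paper highlights, since that is currently the only announced path to the upper bound.
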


\section{Stable geometric-arithmetic mean inequalities and the connection with the Brunn-Minkowski inequality}
\label{sec:2}
\setcounter{equation}{0}
In this section we prove sharp quantitative versions of the geometric and arithmetic mean inequality. The purpose of that
is then to use them in the derivation of a stable Brunn-Minkowski inequality. Our motivation is as follows: It is very
well known, that Brunn-Minkowski inequality can be derived from the arithmetic-geometric mean inequality on a page as done by
Hadwiger and Ohmann in [\ref{bib:Had.Ohm.}], see also [\ref{bib:Har.Lit.Pol.},\ref{bib:Sch.}] and
the celebrated review article of Gardiner [\ref{bib:Gar.}] for details.
The interesting thing is that the reverse process can also be done in a few lines, i.e,
the geometric-arithmetic mean inequality can be derived from the Brunn-Minkowski inequality.
To our best knowledge that has never been written anywhere and we present it in the below lemma.
\begin{lemma}
\label{lem:2.1}
The Brunn-Minkowski inequality implies the geometric and arithmetic mean inequality,
\begin{equation}
\label{2.1}
\frac{x_1+x_2+\dots+x_n}{n}\geq (x_1x_2\dots x_n)^{1/n},\quad\text{for all} \quad x_i\geq 0, \ \ i=1,2,\dots,n.
\end{equation}
i.e., they are equivalent.
\end{lemma}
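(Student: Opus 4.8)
The plan is to apply the Brunn–Minkowski inequality to a single well-chosen pair of boxes (rectangular parallelepipeds) whose side lengths encode the numbers $x_1,\dots,x_n$, and read off the arithmetic–geometric mean inequality from the resulting volume comparison. Concretely, I would first dispose of the trivial case in which some $x_i=0$ (then the right-hand side of (\ref{2.1}) vanishes and there is nothing to prove), so assume all $x_i>0$. Normalize by replacing $x_i$ with $x_i/(x_1\cdots x_n)^{1/n}$; this does not change either side of (\ref{2.1}) after dividing through, so it suffices to prove $\sum x_i\ge n$ under the constraint $\prod x_i=1$.

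The key step is the choice of sets: let $X=\prod_{i=1}^n[0,x_i]$ and $Y=\prod_{i=1}^n[0,1]$ be axis-parallel boxes in $\mathbb R^n$. Then $X+Y=\prod_{i=1}^n[0,x_i+1]$, so $|X+Y|^{1/n}=\bigl(\prod_{i=1}^n(x_i+1)\bigr)^{1/n}$, while $|X|^{1/n}=(\prod x_i)^{1/n}=1$ and $|Y|^{1/n}=1$. The Brunn–Minkowski inequality (\ref{1.5}) then gives
\begin{equation}
\label{2.2}
\Bigl(\prod_{i=1}^n(1+x_i)\Bigr)^{1/n}\geq 2.
\end{equation}
This is not yet (\ref{2.1}); it is the AM–GM inequality applied to the shifted quantities $1+x_i$. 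To extract the genuine statement I would instead iterate or, more cleanly, use a scaling trick: apply (\ref{2.2}) with $x_i$ replaced by $t x_i$ for a free parameter $t>0$ under the same normalization $\prod x_i=1$, obtaining $\prod_i(1+tx_i)\ge (1+t)^n$ for all $t>0$ (here I have used $\prod(tx_i)=t^n$, so the normalized version reads $\prod(1+tx_i)^{1/n}\ge 1+t$ after dividing the box $Y$'s side by... ) — on reflection the cleanest route is: differentiate $\prod_i(1+tx_i)\ge(1+t)^n$ in $t$ at $t=0$, where both sides equal $1$; the derivative inequality is exactly $\sum_i x_i\ge n$, which is (\ref{2.1}) in normalized form.

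An alternative, avoiding differentiation, is a direct one-box argument: take $X=\prod[0,x_i]$ and let $Y=\prod[0,y_i]$ with $y_i$ to be chosen; Brunn–Minkowski gives $\prod(x_i+y_i)^{1/n}\ge(\prod x_i)^{1/n}+(\prod y_i)^{1/n}$, and choosing $y_i=(\prod x_j)^{1/n}$ for every $i$ (a cube of the ``geometric mean'' side length) yields, after expanding, the inequality between $\prod(x_i+g)$ and $2g$ where $g=(\prod x_i)^{1/n}$ — again a shifted form. So some limiting or homogenization step seems genuinely necessary. The main obstacle, and the only subtle point, is precisely this: Brunn–Minkowski on boxes produces AM–GM for the \emph{translated} data $\{1+x_i\}$, and one must recover the untranslated inequality by a scaling/limiting argument (the derivative-at-$0$ trick above, or equivalently replacing each $x_i$ by $x_i^{1/k}$ for boxes built from $k$-fold products and letting $k\to\infty$). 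Everything else — the volume computation for sums of boxes, the normalization — is immediate. I would present the differentiation argument as the cleanest, noting that it uses only (\ref{1.5}) and hence establishes the claimed equivalence together with the classical Hadwiger–Ohmann derivation of Brunn–Minkowski from AM–GM cited above.
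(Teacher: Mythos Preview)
Your proposal is correct and, once you settle on the differentiation step, it is essentially the paper's own argument: the paper applies Brunn--Minkowski to the boxes $K=[0,\epsilon]^n$ and $L=\prod_i[0,x_i]$, obtains $\epsilon+(\prod_i x_i)^{1/n}\le\bigl(\prod_i(x_i+\epsilon)\bigr)^{1/n}$, expands to first order in $\epsilon$, and sends $\epsilon\to0$---which is exactly your ``differentiate $\prod_i(1+tx_i)\ge(1+t)^n$ at $t=0$'' with the roles of the small and large boxes interchanged. The meandering intermediate attempts (the shifted inequality $\prod_i(1+x_i)\ge 2^n$, the geometric-mean cube) are unnecessary and can be dropped from the final write-up.
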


\begin{proof}
Assume (\ref{1.5}) is satisfied for the sets $X=K$ and $Y=L$ according to our convention. Then take the sets $K=[0,\epsilon]^n$ and
$L=[0,x_1]\times[0,x_2]\times\dots\times[0,x_n]$ and apply (\ref{1.5}) to the pair $(K,L)$ to get
$$\epsilon+(x_1x_2\dots x_n)^{1/n}\leq ((x_1+\epsilon)(x_2+\epsilon)\dots (x_n+\epsilon))^{1/n}.$$
Taking now the $n-th$ power of both sides we get after the cancellation of $x_1x_2\dots x_n,$
$$\epsilon n(x_1x_2\dots x_n)^{1/n}+O(\epsilon^2)\leq \epsilon (x_1+x_2+\dots+x_n)+O(\epsilon^2).$$
Sending now $\epsilon$ to zero, we arrive at (\ref{2.1}). The proof is finished now.
\end{proof}
This equivalence suggests, that one may be able to prove a quantitative version of the Brunn-Minkowski inequality via a
quantitative version of the geometric-arithmetic mean inequality. We prove the following theorem.
\begin{theorem}
\label{th:2.2}
For a sequence $x_1,x_2,\dots,x_n\geq 0$ denote $x=(x_1x_2\dots x_n)^{1/n}.$ Then the following quantitative version of the geometric-arithmetic mean inequality holds:
\begin{equation}
\label{2.2}
\frac{x_1+x_2+\dots+x_n}{n}\geq (x_1x_2\dots x_n)^{1/n}+\frac{1}{n}\sum_{i=1}^n(\sqrt{x_i}-\sqrt{x})^2.
\end{equation}
Moreover the equality holds if and only if one of the numbers $x_1,x_2,\dots,x_n$ is zero or if all of them are equal.
\end{theorem}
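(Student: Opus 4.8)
The plan is to reduce the $n$-variable inequality to a clean one-variable statement by a substitution and an averaging/telescoping argument. First I would normalize: since both sides are homogeneous of degree one in the $x_i$, I may assume $x = (x_1x_2\cdots x_n)^{1/n} = 1$, so the claim becomes
\begin{equation*}
\frac{1}{n}\sum_{i=1}^n x_i \;\geq\; 1 + \frac{1}{n}\sum_{i=1}^n(\sqrt{x_i}-1)^2,
\end{equation*}
under the constraint $\prod_i x_i = 1$. Writing $t_i = \sqrt{x_i} \geq 0$, so $\prod_i t_i = 1$, the inequality rearranges to
\begin{equation*}
\sum_{i=1}^n \bigl(t_i^2 - (t_i-1)^2\bigr) \;\geq\; n, \qquad\text{i.e.}\qquad \sum_{i=1}^n (2t_i - 1) \geq n, \qquad\text{i.e.}\qquad \sum_{i=1}^n t_i \geq n.
\end{equation*}
But that last inequality is exactly the ordinary AM--GM inequality applied to $t_1,\dots,t_n$ with $\prod t_i = 1$, which holds and is already available (Lemma~\ref{lem:2.1}, or elementarily). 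So the quantitative statement (\ref{2.2}) collapses, after the square-root substitution, to plain AM--GM for the square roots — a pleasant surprise that makes the whole theorem essentially a one-line algebraic identity once the normalization is in place.

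To carry this out cleanly without the normalization, I would instead expand directly: $(\sqrt{x_i}-\sqrt{x})^2 = x_i - 2\sqrt{x_i x} + x$, so
\begin{equation*}
\frac{1}{n}\sum_{i=1}^n(\sqrt{x_i}-\sqrt{x})^2 = \frac{1}{n}\sum_{i=1}^n x_i - \frac{2\sqrt{x}}{n}\sum_{i=1}^n\sqrt{x_i} + x,
\end{equation*}
and therefore (\ref{2.2}) is equivalent to
\begin{equation*}
\frac{2\sqrt{x}}{n}\sum_{i=1}^n \sqrt{x_i} \;\geq\; 2x, \qquad\text{i.e.}\qquad \frac{1}{n}\sum_{i=1}^n \sqrt{x_i} \;\geq\; \sqrt{x} = (x_1x_2\cdots x_n)^{1/(2n)} = (\sqrt{x_1}\cdots\sqrt{x_n})^{1/n}.
\end{equation*}
This is AM--GM for the nonnegative numbers $\sqrt{x_1},\dots,\sqrt{x_n}$, which I may invoke directly. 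Note this manipulation is valid verbatim even when $x = 0$ (both sides of the rearranged inequality are then $0$), so no case split is needed for the inequality itself.

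For the equality characterization, I would trace back through the equivalence: equality in (\ref{2.2}) holds iff equality holds in AM--GM for $\sqrt{x_1},\dots,\sqrt{x_n}$. The standard equality condition for AM--GM is that all the terms are equal, \emph{except} that when the product (here $\sqrt{x}$) is $0$ the rearranged inequality $\frac{1}{n}\sum\sqrt{x_i}\geq 0$ is an equality precisely when every $\sqrt{x_i}=0$ — wait, that is not right, so I must be more careful here: when $x=0$, the original right-hand side of (\ref{2.2}) is $\frac1n\sum x_i$, which equals the left-hand side automatically, so equality holds whenever \emph{some} $x_i=0$ regardless of the others. When all $x_i>0$, the step above shows equality in (\ref{2.2}) $\iff$ equality in strict AM--GM for positive $\sqrt{x_i}$ $\iff$ all $x_i$ equal. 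Combining the two cases gives exactly the stated dichotomy: equality iff some $x_i=0$ or all $x_i$ are equal. The only mild subtlety — the "main obstacle," such as it is — is keeping the $x=0$ degenerate case straight in the equality analysis, since there the added correction term does not vanish term-by-term but the inequality is nonetheless tight; everything else is a direct reduction to classical AM--GM.
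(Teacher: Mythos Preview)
Your proof is correct and takes essentially the same approach as the paper: expand the squares on the right-hand side and cancel, reducing (\ref{2.2}) to the ordinary AM--GM inequality $\frac{1}{n}\sum_i\sqrt{x_i}\ge\sqrt{x}$ for the square roots, with the equality discussion split into the cases $x>0$ and $x=0$. Your additional normalization-to-$x=1$ variant is a cosmetic repackaging of the same reduction, and your treatment of the degenerate $x=0$ case in the equality analysis is slightly more explicit than the paper's but reaches the same conclusion.
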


\begin{proof}
The proof is trivial, we simply open the brackets on the right to get an equivalent inequality
$$\frac{x_1+x_2+\dots+x_n}{n}\geq x+\frac{x_1+x_2+\dots+x_n}{n}+x-\frac{2\sqrt{x}}{n}\sum_{i=1}^n\sqrt{x_i},$$
which is equivalent to
$$\frac{1}{n}\sum_{i=1}^n\sqrt{x_i}\geq \sqrt{x},$$
i.e., the geometric and arithmetic mean inequality for the sequence $\sqrt{x_1}, \sqrt{x_2},\dots,\sqrt{x_n}.$
If $x>0$ then it is clear that the equality holds if and only if $x_i=x$ for all $1\leq i\leq n.$ If $x=0,$ then clearly equality holds in (\ref{2.2}). If $x=0,$ then again clearly equality holds in (\ref{2.2}).
\end{proof}

\begin{remark}
\label{rem:2.3}
The coefficient $\frac{1}{n}$ in front of the expression $\sum_{i=1}^n(\sqrt{x_i}-\sqrt{x})^2$ on the right can not be improved
as shown by the example $x_1=1, x_i=0, i\geq 2.$
\end{remark}

\begin{corollary}
\label{cor:2.4}
For any sequence $x_1,x_2,\dots,x_n\geq 0$ the inequality holds:
\begin{equation}
\label{2.3}
\frac{x_1+x_2+\dots+x_n}{n}\geq (x_1x_2\dots x_n)^{1/n}+\frac{1}{2n}\sum_{i=1}^n\frac{(x_i-x)^2}{x_i+x}.
\end{equation}
\end{corollary}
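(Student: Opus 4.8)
The plan is to obtain (\ref{2.3}) as an immediate consequence of the sharper-looking estimate (\ref{2.2}) of Theorem~\ref{th:2.2}, by comparing the two correction terms summand by summand. Since Theorem~\ref{th:2.2} already gives
$$\frac{x_1+x_2+\dots+x_n}{n}\geq x+\frac{1}{n}\sum_{i=1}^n(\sqrt{x_i}-\sqrt{x})^2,$$
it suffices to prove the elementary pointwise inequality
$$(\sqrt{x_i}-\sqrt{x})^2\geq \frac{1}{2}\cdot\frac{(x_i-x)^2}{x_i+x}\qquad(i=1,2,\dots,n),$$
and then sum over $i$ and divide by $n$.

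To prove this pointwise inequality I would use the factorization $x_i-x=(\sqrt{x_i}-\sqrt{x})(\sqrt{x_i}+\sqrt{x})$, which gives $(x_i-x)^2=(\sqrt{x_i}-\sqrt{x})^2(\sqrt{x_i}+\sqrt{x})^2$. Assuming first that $x>0$ (so that $x_i+x>0$): if $x_i=x$ both sides vanish; otherwise one cancels the positive factor $(\sqrt{x_i}-\sqrt{x})^2$ and is left with the equivalent statement $2(x_i+x)\geq(\sqrt{x_i}+\sqrt{x})^2$, i.e. $x_i+x\geq 2\sqrt{x_i x}$, which is nothing but the arithmetic-geometric mean inequality for the two numbers $x_i$ and $x$. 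This closes the argument.

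I do not expect a genuine obstacle; the only point deserving a line of care is the degenerate case $x=0$ (equivalently, some $x_j=0$), where the quotient $(x_i-x)^2/(x_i+x)$ is formally $0/0$. With the natural convention that such a term equals $0$, the inequality (\ref{2.3}) is then trivially implied by (\ref{2.2}) — indeed for $x=0$ the right-hand side of (\ref{2.3}) is $\tfrac{1}{2n}\sum_{x_i>0}x_i$, which is dominated by $\tfrac{1}{n}\sum_{i}(\sqrt{x_i})^2$. Hence the proof reduces entirely to the two-variable AM-GM inequality applied once for each index.
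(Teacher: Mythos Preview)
Your proof is correct and follows exactly the paper's approach: deduce (\ref{2.3}) from (\ref{2.2}) via the pointwise estimate $(\sqrt{x_i}-\sqrt{x})^2=\dfrac{(x_i-x)^2}{(\sqrt{x_i}+\sqrt{x})^2}\geq \dfrac{(x_i-x)^2}{2(x_i+x)}$, which is equivalent to the two-variable AM--GM $(\sqrt{x_i}+\sqrt{x})^2\leq 2(x_i+x)$. Your extra care with the degenerate case $x=0$ is a nice addition that the paper leaves implicit.
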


\begin{proof}
The proof is a direct consequence of inequality (\ref{2.2}) and the estimate
$$(\sqrt{x_i}-\sqrt{x})^2=\frac{(x_i-x)^2}{(\sqrt{x_i}+\sqrt{x})^2}\geq \frac{(x_i-x)^2}{2(x_i+x)}.$$
\end{proof}

The next theorem is an alternative version of Theorem~\ref{th:2.2} which may be of separate interest.
\begin{theorem}
\label{th:2.5}
Assume $x_1,x_2,\dots,x_n\geq 0.$ Then the following quantitative version of the geometric-arithmetic mean inequality holds:
\begin{equation}
\label{2.4}
\frac{x_1+x_2+\dots+x_n}{n}\geq (x_1x_2\dots x_n)^{1/n}+\frac{1}{n(n-1)}\sum_{1\leq i<j\leq n}(\sqrt{x_i}-\sqrt{x_j})^2.
\end{equation}
Moreover the equality holds only in one of the following cases:
\begin{itemize}
\item[(i)] If $n=2$.
\item[(ii)] All but one of the numbers $x_1,x_2,\dots,x_n$ are zero.
\item[(iii)] All of the numbers $x_1,x_2,\dots,x_n$ are equal.
\end{itemize}

\end{theorem}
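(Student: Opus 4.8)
The plan is to substitute $y_i=\sqrt{x_i}$ and reduce (\ref{2.4}) to the ordinary arithmetic–geometric mean inequality applied to the $\binom{n}{2}$ pairwise products $y_iy_j$. Writing $S_1=\sum_{i=1}^n y_i$ and $S_2=\sum_{i=1}^n y_i^2$, I would first record the elementary identity
$$\sum_{1\le i<j\le n}(y_i-y_j)^2=nS_2-S_1^2,$$
which follows by expanding the squares (each $y_k^2$ occurs $n-1$ times) and using $S_1^2=S_2+2\sum_{i<j}y_iy_j$. Substituting this into (\ref{2.4}) and using $(x_1x_2\cdots x_n)^{1/n}=\big(\prod_i y_i\big)^{2/n}$, the claimed inequality becomes, after multiplying by $n(n-1)$ and cancelling, the equivalent statement
$$\frac{2}{n(n-1)}\sum_{1\le i<j\le n}y_iy_j\ \ge\ \Big(\prod_{i=1}^n y_i\Big)^{2/n}.$$

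The key observation is that the left-hand side is exactly the arithmetic mean of the $\binom{n}{2}$ nonnegative numbers $\{y_iy_j:1\le i<j\le n\}$, while the right-hand side is their geometric mean: each index $k$ occurs in exactly $n-1$ of the pairs, so $\prod_{i<j}(y_iy_j)=\prod_k y_k^{\,n-1}=\big(\prod_k y_k\big)^{n-1}$, and raising to the power $1/\binom{n}{2}=\tfrac{2}{n(n-1)}$ yields $\big(\prod_k y_k\big)^{2/n}$. Hence the displayed inequality is an immediate instance of classical AM–GM, and (\ref{2.4}) follows.

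For the equality discussion I would invoke the equality case of classical AM–GM: equality holds in the displayed inequality iff all the products $y_iy_j$ ($i<j$) equal a common value $c\ge 0$. If $n=2$ there is only one such product and equality is automatic — case (i). If $n\ge 3$ and $c=0$, then every pair contains a zero factor, so at most one $y_i$ is nonzero, i.e. all but one of the $x_i$ vanish — case (ii). If $n\ge 3$ and $c>0$, then all $y_i>0$, and for any $j\ne k$ one may pick $\ell\notin\{j,k\}$ (possible since $n\ge 3$), whence $y_\ell y_j=y_\ell y_k=c$ forces $y_j=y_k$; thus all $y_i$ coincide and so do the $x_i$ — case (iii). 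Conversely, a direct substitution shows that in each of (i)–(iii) equality does hold. Since every step of the reduction is an equivalence, this exhausts the equality cases.

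There is essentially no serious obstacle here: the argument is a change of variables followed by AM–GM for the pairwise products. The only point demanding a little care is the equality analysis — in particular, isolating the degenerate case $c=0$ (several coordinates vanishing) from $c>0$, and using $n\ge 3$ to exhibit the third index that forces all coordinates equal when $c>0$.
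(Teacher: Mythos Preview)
Your proof is correct and follows essentially the same route as the paper: after expanding the squares, both reduce (\ref{2.4}) to the AM--GM inequality for the $\binom{n}{2}$ pairwise products $\sqrt{x_ix_j}=y_iy_j$, and the equality analysis (splitting on whether the common product value is zero or positive, and using a third index when $n\ge 3$) is the same in substance. Your presentation of the equality case is arguably a bit cleaner, but there is no methodological difference.
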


\begin{proof}
The proof is again trivial, upon opening the brackets on the right we get an equivalent inequality
\begin{equation}
\label{2.5}
\frac{2}{n(n-1)}\sum_{1\leq i<j\leq n}\sqrt{x_ix_j}\geq (x_1x_2\dots x_n)^{1/n},
\end{equation}
which is exactly the geometric-arithmetic mean inequality for the numbers $\sqrt{x_ix_j}.$
It is also clear, that the equality in (\ref{2.5}) will hold if and only if all the numbers $\sqrt{x_ix_j}$ are equal. It is clear that the case $n=2$ provides equality in (\ref{2.4}). Assume now $n\geq 3.$ If $x_i\neq 0$ for some $1\leq i\leq n,$ then we get from the equality $\sqrt{x_ix_j}=\sqrt{x_ix_k}$ that $x_j=x_k$ for $j,k\neq i.$ On the other hand as $n\geq 3$ the equality $\sqrt{x_ix_j}=\sqrt{x_jx_k}$ holds and thus we get $x_j(x_i-x_k)=0$ for all $j,k\neq i$ and $j\neq k.$ This then implies that $x_j=0$ for all $j\neq i$ or $x_i=x_j$ for all $1\leq i,j\leq n,$ which are exactly cases $(ii)$ and $(iii)$ respectively. It is trivial that both cases provide equality in (\ref{2.5}) The proof is finished now.
\end{proof}

\begin{remark}
\label{rem:2.6}
The constant $\frac{1}{n(n-1)}$ can not be improved in the inequality (\ref{2.4}) as shown by the example $x_1=1, x_i=0, i\geq 2.$
\end{remark}
The last theorem in this section provides another key estimate in the proof of Theorem~\ref{th:1.1}.

\begin{remark}
\label{rem:2.6.1}
Unlike the classical geometric-arithmetic mean inequality, there are several equality cases in both (\ref{2.2}) and (\ref{2.4}).
\end{remark}

\begin{lemma}
\label{lem:2.7}
For any numbers $x_1,x_2,\dots,x_n\in\left[0,\frac{1}{2}\right]$ the inequality holds:
\begin{equation}
\label{2.6}
\sum_{i=1}^n\sqrt{\frac{x_i}{1+x_i}}\geq n\sqrt{\frac{x}{1+x}},
\end{equation}
where as before we define $x=(x_1x_2\dots x_n)^{1/n}.$
\end{lemma}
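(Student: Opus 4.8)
The key is to recognize (2.6) as an instance of the same "geometric vs. arithmetic mean after a change of variable" trick already used twice in this section, applied to the function $f(t)=\sqrt{t/(1+t)}$. The inequality asks us to show that the arithmetic mean of $f(x_i)$ dominates $f$ evaluated at the geometric mean of the $x_i$; equivalently, writing $g=\log f\circ\exp$, it suffices to show $g$ is convex on the relevant range, since then Jensen gives $\frac1n\sum g(\log x_i)\ge g\bigl(\frac1n\sum\log x_i\bigr)$, i.e. $\frac1n\sum \log f(x_i)\ge \log f(x)$, and this is in fact \emph{stronger} than (2.6) (arithmetic mean $\ge$ geometric mean of the $f(x_i)$, then geometric mean $\ge f(x)$). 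So the entire content reduces to a one-variable statement.

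\textbf{The one-variable step.} First I would set $\phi(s)=\log f(e^s)=\tfrac12 s-\tfrac12\log(1+e^s)$ and compute $\phi''(s)=-\tfrac12\cdot\dfrac{e^s}{(1+e^s)^2}<0$ — so $\phi$ is \emph{concave}, and the naive log-convexity route fails. This means one cannot hope that the geometric mean of the $f(x_i)$ already dominates $f(x)$; the restriction $x_i\le\tfrac12$ must be doing real work. The correct reduction is instead: it suffices to prove that $h(t):=f(t^2)=\sqrt{t^2/(1+t^2)}=t/\sqrt{1+t^2}$ composed appropriately, or more directly that the function $t\mapsto \log f(t)$ is convex in $\log t$ — which we just saw is false — so one must argue differently. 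The workable approach: show $\psi(t):=f(t)/\sqrt t=1/\sqrt{1+t}$ is such that $\sum f(x_i)=\sum \sqrt{x_i}\,\psi(x_i)$, and bound below using that $\psi$ is decreasing together with the already-proven Lemma (Theorem~\ref{th:2.2}/Corollary for $\sqrt{x_i}$). Concretely: by AM--GM on $\sqrt{x_i}$ we have $\frac1n\sum\sqrt{x_i}\ge\sqrt{x}$, and since each $x_i\le\frac12$ we have $\psi(x_i)=\frac1{\sqrt{1+x_i}}\ge\frac1{\sqrt{1+1/2}}$, but that crude bound is not tight enough, so the real argument needs $\psi$ evaluated near $x$.

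\textbf{Expected main obstacle.} The delicate point is exactly this interplay: $f$ is concave in $\log t$, so Jensen pushes the wrong way, and one must exploit $x_i\in[0,\tfrac12]$ to recover the inequality. I expect the intended proof squares both sides and reduces (2.6) to showing
\[
\Bigl(\sum_{i=1}^n\sqrt{\tfrac{x_i}{1+x_i}}\Bigr)^2\ge n^2\,\tfrac{x}{1+x},
\]
then uses $\sum_i\frac{x_i}{1+x_i}\ge$ (something) via the function $t\mapsto\frac{t}{1+t}$ being concave — Jensen in the \emph{ordinary} variable gives $\frac1n\sum\frac{x_i}{1+x_i}\le\frac{\bar a}{1+\bar a}$ with $\bar a=\frac1n\sum x_i$, again the wrong direction — so instead I would bound the cross terms: by AM--GM, $\sum_{i<j}\sqrt{\frac{x_i}{1+x_i}}\sqrt{\frac{x_j}{1+x_j}}\ge \binom n2\bigl(\prod_i\frac{x_i}{1+x_i}\bigr)^{1/n}=\binom n2\frac{x}{(\prod(1+x_i))^{1/n}}$, and then by AM--GM on $(1+x_i)$ this is $\le$... which runs backwards too. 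The genuine trick must be: apply Corollary~\ref{cor:2.4}-style reasoning, or directly verify that $u\mapsto \log\frac{u}{1+u}$ is convex in $\log u$ (it is: second derivative $\frac1{(1+u)^2}>0$!), hence $\prod\frac{x_i}{1+x_i}\le\bigl(\frac x{1+x}\bigr)^n$ is false — rather $\frac1n\sum\log\frac{x_i}{1+x_i}\ge\log\frac{x}{1+x}$, i.e. the geometric mean of $\frac{x_i}{1+x_i}$ is $\ge\frac{x}{1+x}$, and then AM--GM upgrades geometric to arithmetic: $\frac1n\sum\frac{x_i}{1+x_i}\ge\bigl(\prod\frac{x_i}{1+x_i}\bigr)^{1/n}\ge\frac{x}{1+x}$. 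Finally Cauchy--Schwarz (or power-mean) converts this into (2.6): $\frac1n\sum\sqrt{\frac{x_i}{1+x_i}}\ge\sqrt{\frac1n\sum\frac{x_i}{1+x_i}}$? — no, that is the reverse of Cauchy--Schwarz. So at the last step one needs $\frac1n\sum\sqrt{b_i}\ge\sqrt{(\prod b_i)^{1/n}}=\bigl(\prod b_i\bigr)^{1/(2n)}$ with $b_i=\frac{x_i}{1+x_i}$, which is just AM--GM applied to $\sqrt{b_i}$, and $(\prod b_i)^{1/(2n)}\ge\sqrt{\frac x{1+x}}$ by the log-convexity observation above. \textbf{That log-convexity of $u\mapsto\log\frac u{1+u}$ — equivalently concavity of $u\mapsto\log\frac{1+u}{u}=\log(1+1/u)$ in $\log u$ — is the crux, and checking it is where the hypothesis $x_i\le\tfrac12$ may or may not actually be needed; I would verify whether the bound holds for all $x_i>0$ or genuinely requires the upper cap, and present the short convexity computation as the heart of the argument.}
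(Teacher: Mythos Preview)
Your proposal does not work: the final ``log-convexity'' step is false, and in fact you had already computed the correct sign earlier and then contradicted yourself. With $s=\log u$ one has
\[
\frac{d^2}{ds^2}\log\frac{u}{1+u}=\frac{d^2}{ds^2}\bigl(s-\log(1+e^s)\bigr)=-\frac{e^s}{(1+e^s)^2}<0,
\]
so $u\mapsto\log\frac{u}{1+u}$ is \emph{concave} in $\log u$, exactly as your computation of $\phi''$ at the outset showed. Consequently the geometric mean of $b_i=\frac{x_i}{1+x_i}$ is \emph{at most} $\frac{x}{1+x}$, not at least; equivalently $\prod(1+x_i)\ge(1+x)^n$ by convexity of $s\mapsto\log(1+e^s)$, which is the reverse of what your argument needs. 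A numerical check: for $n=2$, $x_1=\tfrac14$, $x_2=\tfrac12$ one gets $(\prod b_i)^{1/2}=\sqrt{1/15}\approx0.258$ while $x/(1+x)\approx0.261$. So the chain ``AM--GM on $\sqrt{b_i}$, then $(\prod b_i)^{1/n}\ge x/(1+x)$'' breaks at the second link, and no amount of rearranging AM--GM and Jensen in a single shot will succeed, precisely because the inequality is \emph{false} without the hypothesis $x_i\le\tfrac12$ (take $n=2$, $x_1$ large, $x_2=1$: the left side tends to $1+\tfrac{1}{\sqrt2}<2$ while the right side tends to $2$).

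The paper's proof is completely different in structure: it uses Cauchy's forward--backward induction. The base case $n=2$ is handled by the substitution $x_i=\tan^2\alpha_i$, which reduces (2.6) to
\[
(\sin\alpha_1+\sin\alpha_2)^2\cos(\alpha_1-\alpha_2)\ge 4\sin\alpha_1\sin\alpha_2,
\]
and after elementary manipulation to $2\cos^2\tfrac{\alpha_1+\alpha_2}{2}\ge(\sin\alpha_1+\sin\alpha_2)^2$; this last inequality follows from $\tan\alpha_i\le\tfrac{1}{\sqrt2}$, which is exactly where the hypothesis $x_i\le\tfrac12$ enters. The forward step doubles $n$ by splitting into two blocks and applying the $n=2$ case to the two partial geometric means; the backward step goes from $n+1$ to $n$ by adjoining $x_{n+1}=x$. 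Your proposal never isolates a place where the bound $x_i\le\tfrac12$ does real work, and that is the essential missing idea.
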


\begin{proof}
We prove the theorem by induction up and down in $n$, namely the strategy is to prove inequality (\ref{2.6}) for $n=2^k$ and then
derive it for $n-1$ provided it holds for $n.$ The first step is to prove (\ref{2.6}) for $n=2,$ i.e.,
\begin{equation}
\label{2.7}
\sqrt{\frac{x_1}{1+x_1}}+\sqrt{\frac{x_2}{1+x_2}}\geq 2\sqrt{\frac{x}{1+x}},
\end{equation}
where $x=\sqrt{x_1x_2}.$ Denote $x_i=\tan^2(\alpha_i),$ where $\alpha_i\in[0,\alpha]\subset[0,\frac{\pi}{2})$ such that $\tan\alpha\leq\frac{1}{\sqrt 2}.$
After some trigonometric manipulation, inequality (\ref{2.6}) turns into the form
$$(\sin\alpha_1+\sin\alpha_2)^2\cos(\alpha_1-\alpha_2)\geq 4\sin\alpha_1\sin\alpha_2,$$
which is the same as
$$(\sin\alpha_1+\sin\alpha_2)^2\left[1-2\sin^2\frac{\alpha_1-\alpha_2}{2}\right]\geq 4\sin\alpha_1\sin\alpha_2,$$
which is equivalent to
$$(\sin\alpha_1-\sin\alpha_2)^2\geq 2\sin^2\frac{\alpha_1-\alpha_2}{2}(\sin\alpha_1+\sin\alpha_2)^2,$$
and again after some trigonometry we arrive at an equivalent form
$$4\sin^2\frac{\alpha_1-\alpha_2}{2}\cos^2\frac{\alpha_1+\alpha_2}{2}\geq 2\sin^2\frac{\alpha_1-\alpha_2}{2}(\sin\alpha_1+\sin\alpha_2)^2,$$
i.e., the form
\begin{equation}
\label{2.8}
2\cos^2\frac{\alpha_1+\alpha_2}{2}\geq (\sin\alpha_1+\sin\alpha_2)^2.
\end{equation}
It is clear, that
$$2\cos^2\frac{\alpha_1+\alpha_2}{2}\geq 2\cos^2\alpha$$
 and
 $$(\sin\alpha_1+\sin\alpha_2)^2\leq 4\sin^2\alpha,$$
  and thus (\ref{2.8})
follows from the fact $\tan\alpha\leq\frac{1}{\sqrt 2},$ thus (\ref{2.7}) is proven. If now inequality (\ref{2.6}) is true for $n=k,$ then we have for $2k$ numbers $x_1,x_2,\dots,x_k,x_{k+1},\dots,x_{2k}\in[0,\frac{1}{2}],$ that
\begin{align*}
\sum_{i=1}^{2k}\sqrt{\frac{x_i}{1+x_i}}&=\sum_{i=1}^{k}\sqrt{\frac{x_i}{1+x_i}}+\sum_{i=k+1}^{2k}\sqrt{\frac{x_i}{1+x_i}}\\
&\geq k\left(\sqrt{\frac{(\prod_{i=1}^kx_i)^{1/k}}{1+(\prod_{i=1}^kx_i)^{1/k}}}+\sqrt{\frac{(\prod_{i=k+1}^{2k}x_i)^{1/k}}{1+(\prod_{i=k+1}^{2k}x_i)^{1/k}}}\right)\\
&\geq 2k\sqrt{\frac{x}{1+x}},
\end{align*}
hence, (\ref{2.6}) is true for $2k$ numbers too. By induction (\ref{2.6}) holds true for any number $n=2^k,$ $k\in\mathbb N.$ Observe, now, that is (\ref{2.6}) holds for $n+1$ numbers, then given the sequence $x_1,x_2,\dots,x_n\in[0,\frac{1}{2}],$ we can utilizing it for the $n+1$ numbers
$x_1,x_2,\dots,x_n,x\in[0,\frac{1}{2}],$ to derive (\ref{2.6}) exactly for the sequence $x_1,x_2,\dots,x_n.$ The proof of the lemma is finished now.

\end{proof}

\section{Proof of the main result}
\label{sec:3}
\setcounter{equation}{0}
We adopt the mass transportation approach proposed by Gromov e.g.,  [\ref{bib:Mil.Sch.}] and successfully employed by
Ball in [\ref{bib:Bal.}] and Figalli, Maggi and Pratelli in [\ref{bib:Fig.Mag.Pra.2}].
In fact our proof is a refinement of the Figalli, Maggi, Pratelli proof in [\ref{bib:Fig.Mag.Pra.2}] where the estimates (\ref{2.2}) and (\ref{2.6}) and their careful application play a significant role.

\begin{proof}[Proof of Theorem~\ref{th:1.1}] First we prove the following suitably modified partial version of the Brunn-Minkowski inequality.

\begin{lemma}
\label{lem:3.1.1}
Assume $K,L\subset\mathbb R^n$ are convex bodies such that $|K|,|L|>0$ and denote $L_\epsilon=\epsilon L$ for any $\epsilon\in\mathbb R.$
Then there exists a positive constant $\epsilon_0=\epsilon_0(K,L)>0$ that depends on the convex sets $K$ and $L$ such, that for any $\epsilon\in (0,\epsilon_0),$ the inequality holds:
\begin{equation}
\label{3.0.1}
|K+L_\epsilon|^{1/n}\geq (|K|^{1/n}+|L_\epsilon|^{1/n})\left(1+\frac{A(K,L_\epsilon)^2}{C(K,n)\sigma(K,L_\epsilon)^{1/n}}\right),
\end{equation}
with the constant $C(K,n)=100n^4q_K^2.$
\end{lemma}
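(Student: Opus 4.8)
The plan is to run the mass-transportation scheme of Gromov, Ball, and Figalli--Maggi--Pratelli (as in [\ref{bib:Fig.Mag.Pra.2}]) for the pair $(K,L_\epsilon)$, and to read off the Brunn--Minkowski defect through the sharp stable arithmetic--geometric mean inequalities (\ref{2.2})--(\ref{2.4}) and Lemma~\ref{lem:2.7} rather than through the coarser estimates used previously, paying for the dimensional constant only via the roundness of $K$. The first step is a reduction to a well-rounded $K$: each of $|K+L_\epsilon|$, $|K|$, $|L_\epsilon|$, $A(K,L_\epsilon)$, $\sigma(K,L_\epsilon)$ and $q_K$ is invariant, up to a common factor $|\det A|$ that cancels, under a simultaneous nonsingular linear change $x\mapsto Ax$ applied to both $K$ and $L$ (note $L_\epsilon\mapsto\epsilon(AL)$); choosing $A$ to almost realise the infimum in (\ref{1.8}) and rescaling, one may assume $B_1\subseteq K\subseteq B_{2q_K}$ (the threshold $\epsilon_0$ is allowed to depend on $K,L$, so this reduction is harmless).

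Let $T=\nabla\phi$ be Brenier's map taking the normalised Lebesgue measure of $K$ onto that of $L$, so $\phi$ is convex, $T(x)\in\overline L$ for a.e.\ $x\in K$, and $\det D^2\phi(x)=m:=|L|/|K|$ a.e.\ (Monge--Amp\`ere); write $\lambda_1(x),\dots,\lambda_n(x)\ge0$ for the eigenvalues of the Alexandrov Hessian $D^2\phi(x)$, so $\prod_i\lambda_i(x)=m$ a.e. Since $\tfrac12|x|^2+\epsilon\phi$ is strictly convex, $\Phi_\epsilon:=\mathrm{id}+\epsilon T=\nabla\bigl(\tfrac12|x|^2+\epsilon\phi\bigr)$ is injective on $K$ and $\Phi_\epsilon(K)\subseteq K+\epsilon L=K+L_\epsilon$, so by the area formula for Brenier maps (see [\ref{bib:Fig.Mag.Pra.1}]) $|K+L_\epsilon|\ge\int_K\prod_i(1+\epsilon\lambda_i(x))\,dx$. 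Expanding the product in elementary symmetric functions and using Maclaurin's inequality (one keeps only the term linear in $\epsilon$, the remaining ones being nonnegative), followed by the sharp inequality (\ref{2.2}) applied to $\lambda_1(x),\dots,\lambda_n(x)$ --- and with (\ref{2.3}) and Lemma~\ref{lem:2.7} applied with $x_i=\epsilon\lambda_i(x)\in[0,\tfrac12]$ to sharpen the lower bound on the region where no $\lambda_i$ is large --- gives the pointwise estimate $\prod_i(1+\epsilon\lambda_i)\ge(1+\epsilon m^{1/n})^n+\epsilon\sum_i(\sqrt{\lambda_i}-m^{1/(2n)})^2$. Integrating over $K$ and using $|K|(1+\epsilon m^{1/n})^n=(|K|^{1/n}+|L_\epsilon|^{1/n})^n$,
\[
|K+L_\epsilon|\ \ge\ \bigl(|K|^{1/n}+|L_\epsilon|^{1/n}\bigr)^n+\epsilon D,\qquad D:=\int_K\sum_{i=1}^n\bigl(\sqrt{\lambda_i(x)}-m^{1/(2n)}\bigr)^2\,dx\ \ge\ 0 .
\]

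The crux --- and the step I expect to be the main obstacle --- is to bound $D$ below by the asymmetry. The integrand of $D$ measures the pointwise deviation of $D^2\phi$ from the homothety $m^{1/n}I$; because $K$ is round, a Poincar\'e inequality on $K$ together with the trace estimate of Figalli, Maggi and Pratelli bounds $D$ from below by (a power of) the $L^1(K)$-distance of $T$ to an affine map $x\mapsto m^{1/n}x+v$, and since $T(K)=L$ this forces $K$ to be $L^1$-close to the translate $(|K|/|L|)^{1/n}L+(\text{vector})$, which is precisely the homothet in the definition of $A(K,L)=A(K,L_\epsilon)$. Tracking the constants --- all the relevant Poincar\'e and trace constants on a body with $R/r\le2q_K$ are at most an absolute constant times a power of $n$ times a power of $q_K$ --- one obtains a bound of the form $D\ge c^{-1}n^{-3}q_K^{-2}|K|^{(n-1)/n}|L|^{1/n}A(K,L)^2$ with $c$ absolute. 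Any slack in the Poincar\'e inequality, in the trace estimate, or in the treatment of the large-eigenvalue region costs a power of $n$; it is the sharpness of (\ref{2.2})--(\ref{2.4}) and of Lemma~\ref{lem:2.7} that keeps the exponent down to the value recorded in Theorem~\ref{th:1.1}.

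Finally one assembles: taking $n$-th roots in the volume bound and using $(a+b)^{1/n}\ge a^{1/n}\bigl(1+\tfrac{b}{2na}\bigr)$ (valid for $0\le b\le a$, hence here since $b/a=\epsilon D/(|K|^{1/n}+|L_\epsilon|^{1/n})^n=O(\epsilon)$), then inserting the lower bound for $D$ and using that the $\sigma(K,L_\epsilon)^{1/n}$ in the denominator of (\ref{3.0.1}) carries exactly the power of $\epsilon$ (namely $\epsilon^{-1}$, up to a factor depending only on $|K|/|L|$) that matches the $\epsilon$ gained on the left, one arrives at (\ref{3.0.1}) with $C(K,n)=100n^4q_K^2$ once every $O(\epsilon^2)$ remainder has been absorbed --- which is what pins down $\epsilon_0=\epsilon_0(K,L)$. (Letting $\epsilon\to0^+$ in (\ref{3.0.1}) and using $\tfrac{d}{d\epsilon}\big|_{0^+}|K+\epsilon L|^{1/n}=P_L(K)/(n|K|^{(n-1)/n})$ then recovers Theorem~\ref{th:1.1}, and Corollary~\ref{cor:1.2} follows from (\ref{1.9})--(\ref{1.10}).)
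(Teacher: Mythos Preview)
Your overall architecture matches the paper's: affine reduction to a round $K$, Brenier map $T=\nabla\phi$, the volume bound $|K+L_\epsilon|\ge\int_K\prod_i(1+\epsilon\lambda_i)$, a pointwise eigenvalue defect estimate, and the Figalli--Maggi--Pratelli trace inequality to reach the asymmetry. Your pointwise inequality $\prod_i(1+\epsilon\lambda_i)\ge(1+\epsilon\mu)^n+\epsilon\sum_i(\sqrt{\lambda_i}-\sqrt\mu)^2$ (with $\mu=m^{1/n}$) is correct --- it follows at once from (\ref{2.2}) applied to $e_1(\lambda)$ together with Maclaurin's inequalities $e_k(\lambda)\ge\binom{n}{k}\mu^k$ for $k\ge2$ --- and is in fact cleaner than the paper's route (\ref{3.6})--(\ref{3.14}): the paper applies (\ref{2.2}) to the quantities $\epsilon\lambda_i/(1+\epsilon\lambda_i)$, which produces the ``wrong'' geometric mean $u$, and then needs Lemma~\ref{lem:2.7} plus the uniform eigenvalue bounds (\ref{3.12}) (obtained via Caffarelli regularity after a smooth approximation) to swap $u$ for $v=\sqrt{\epsilon\mu/(1+\epsilon\mu)}$. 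Your own invocation of Lemma~\ref{lem:2.7} is therefore superfluous and your parenthetical description of the derivation is garbled, but the estimate itself stands and arguably buys you the avoidance of the smoothness reduction.

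The genuine gap is your ``crux'' paragraph. Set $V=|\nabla F-\mu I|=\bigl(\sum_i(\lambda_i-\mu)^2\bigr)^{1/2}$. What the trace inequality (Lemma~\ref{lm:3.1}) together with the projection argument (\ref{3.19})--(\ref{3.20}) actually yields is $A(K,L_\epsilon)\le C_0nq_K(\mu|K|)^{-1}\int_K V$, so one must show $(\int_K V)^2\lesssim n^2\mu|K|\,D$. There is no bulk Poincar\'e inequality in the argument, and the $L^1(K)$-distance of $T$ to an affine map does not by itself control $|T(K)\triangle(\mu K+v)|$; the paper works on $\partial K$ instead. More importantly, $D=\int_K\sum_i(\sqrt{\lambda_i}-\sqrt\mu)^2$ controls $V$ only through a weight: with $W=\sum_i(\lambda_i+\mu)$ one has pointwise $V\le\sum_i|\lambda_i-\mu|\le\sqrt{2D'W}$, hence by Cauchy--Schwarz $(\int_K V)^2\le 2D\int_K W$. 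The difficulty is that $\int_K W$ is \emph{not} a priori of order $n\mu|K|$; one only has the algebraic bound $W\le\sqrt n\,V+2n\mu$, which feeds $\int_K V$ back into its own upper bound. The paper closes this loop by first observing that one may assume $\beta(K,L_\epsilon)/(\epsilon\mu)<4/(100n^4q_K^2)$ --- otherwise (\ref{3.0.1}) is trivial because $A\le2$ --- and under this smallness the resulting quadratic inequality in $\int_K V$ yields $\tfrac{1}{|K|}\int_K V\le C\sqrt{n^2\mu\beta/\epsilon}$, which is exactly the extra factor of $n$ that brings the total to $n^4q_K^2$. This Cauchy--Schwarz bootstrap and the ``may assume $\beta$ small'' reduction are the substance of the step you label as the main obstacle, and neither appears in your sketch; without them the constants do not close to $C(K,n)=100n^4q_K^2$.
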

\begin{remark}
\label{rem:3.1.2}
The important point is that the constant $C(K,n)$ in the inequality (\ref{3.0.1}) does not depend on the set $L.$ This fact will be crucial
when deriving an isoperimetric inequality from Lemma~\ref{lem:3.1.1}.
\end{remark}

\begin{proof}
It is clear, that if the constant $C(K,n)$ is frozen, then the inequality (\ref{3.0.1}) is affine transformation-invariant, thus as the constant
$C(K,n)$ does not depend on the set $L,$ then we can without loss of generality assume, that the set $K$ has maximal possible roundedness, i.e.,
$K$ is such that the quantity $q_K$ in (\ref{1.8}) is achieved for the identical transformation $T(x)=x.$
An inner approximation, i.e., an approximation of the sets $K$ and $L$ from inside by other compact convex sets
lowers the measure of the sum $|K+L_\epsilon|$ and approximates the quantities  $|K|,$ $|L_\epsilon|,$ $A(K,L_\epsilon)$ and $\sigma(K,L_\epsilon)$, thus we can assume without loss of generality, that the sets $K$ and $L$ are smooth and uniformly convex. Due to the convexity of the sets $K$ and $L$, one has by Brenier's theorem, [\ref{bib:Bre.1},\ref{bib:Bre.2},\ref{bib:Vil.}] that there exists a convex function $\varphi(x)\colon\mathbb R^n\to\mathbb R$ such that the its gradient $F=\nabla\varphi\colon\mathbb R^n\to\mathbb R^n$ is a function of bounded variation, $F\in BV(\mathbb R^n, L)$ and pushes forward the probability measure $\frac{1}{|K|}\chi_{K}dx$ to the probability measure $\frac{1}{|L|}\chi_{L}dx,$ i.e., it has a constant Jacobian in $K$:
\begin{equation}
\label{3.1}
\det F(x)=\frac{|L|}{|K|},\quad\text{for all}\quad x\in K.
\end{equation}
Next, we can without loss of generality assume, that $|K|>|L|,$ thus we get $\sigma(K,L)=\frac{|K|}{|L|}$. As long as the sets $K$ and $L$ are smooth and uniformly convex, Caffarelli showed in [\ref{bib:Caf.1},\ref{bib:Caf.2}], that Brenier's map is smooth up to the boundary of $K,$ i.e., $F\in C^\infty(\overline{K},\overline{L})$. On the other hand by the convexity of the map $\varphi,$ the Hessian
$\nabla^2\varphi(x)=\nabla F(x)$ is a symmetric positive semi-definite matrix for all $x\in K.$ Denoting the eigenvalues of $\nabla F$ by
$\lambda_1(x),\lambda_2(x),\dots,\lambda_n(x),$ they must be real and positive, and we get according to the condition (\ref{3.1}) that,
\begin{equation}
\label{3.2}
\prod_{i=1}^n\lambda_i(x)=\frac{|L|}{|K|}=\mu^n,\quad\text{for all}\quad x\in K.
\end{equation}
where $\mu=\left(\frac{|L|}{|K|}\right)^{1/n}=\frac{1}{\sigma(K,L)^{1/n}}.$ It is then clear that we can use the map $\varphi_\epsilon(x)=\epsilon\varphi(x)\colon\mathbb R^n\to\mathbb R$ as Brenier's map for $\epsilon>0$ and for the sets $K$ and $L_\epsilon.$
Denote next $F_\epsilon(x)=\epsilon F(x)$ and $G_\epsilon(x)=x+F_\epsilon(x).$ It is clear, that $G_\epsilon\colon K\to K+L_\epsilon,$ thus $G_\epsilon(K)\subset K+L_\epsilon.$ Let us now verify, that the map $G_\epsilon\colon K\to K+L_\epsilon$ is surjective. Assume in contradiction, that
$G_\epsilon(x_1)=G_\epsilon(x_2),$ for some $x_1,x_2\in K,$ with $x_1\neq x_2$. Thus we get
\begin{equation}
\label{3.3}
F_\epsilon(x_1)-F_\epsilon(x_2)=x_2-x_1.
\end{equation}
By the mean value formula we have $F_\epsilon(x_1)-F_\epsilon(x_2)=\nabla F_\epsilon(\theta x_1+(1-\theta)x_2)(x_1-x_2)$ for some $\theta\in [0,1],$ thus owing to (\ref{3.3})
we obtain $(\nabla F_\epsilon(\theta x_1+(1-\theta)x_2)+I)(x_1-x_2)=0$ which gives
\begin{equation}
\label{3.4}
\det[\nabla F_\epsilon(\theta x_1+(1-\theta)x_2)+I]=0.
\end{equation}
Recall now, that the Hessian $\nabla F_\epsilon(\theta x_1+(1-\theta)x_2)$ is positive definite and thus so is the sum
$\nabla F_\epsilon(\theta x_1+(1-\theta)x_2)+I$ which contradicts (\ref{3.4}). From the surjectivity of the map $G_\epsilon\colon K\to K+L_\epsilon$ and the fact $G_\epsilon(K)\subset K+L_\epsilon,$ we obtain
\begin{equation}
\label{3.5}
|K+L_\epsilon|\geq |G_\epsilon(K)|=\int_K \det \nabla G_\epsilon(x)dx=\int_K \prod_{i=1}^n(\epsilon\lambda_i+1)dx.
\end{equation}
We aim to estimate the product $\prod_{i=1}^n(\epsilon\lambda_i+1)$ from below.
We have by Theorem~\ref{th:2.2}, that
\begin{equation}
\label{3.6}
\left(\prod_{i=1}^n\frac{\epsilon\lambda_i}{1+\epsilon\lambda_i}\right)^{1/n}\leq \frac{1}{n}\sum_{i=1}^n\frac{\epsilon\lambda_i}{1+\epsilon\lambda_i}-
\frac{1}{n}\sum_{i=1}^n\left(\sqrt{\frac{\epsilon\lambda_i}{1+\epsilon\lambda_i}}-u\right)^2,
\end{equation}
where
$$u=\left(\prod_{i=1}^n\frac{\epsilon\lambda_i}{1+\epsilon\lambda_i}\right)^{1/2n}.$$
Again, by the geometric-arithmetic mean inequality we get
\begin{equation}
\label{3.7}
\left(\prod_{i=1}^n\frac{1}{1+\epsilon\lambda_i}\right)^{1/n}\leq \frac{1}{n}\sum_{i=1}^n\frac{1}{1+\epsilon\lambda_i}.
\end{equation}
thus summing inequalities (\ref{3.6}) and (\ref{3.7}) we obtain
\begin{equation}
\label{3.8}
\frac{1+\epsilon\mu}{(\prod_{i=1}^n(\epsilon\lambda_i+1))^{1/n}}\leq 1-\frac{1}{n}\sum_{i=1}^n\left(\sqrt{\frac{\epsilon\lambda_i}{1+\epsilon\lambda_i}}-u\right)^2\leq 1.
\end{equation}
Next denote $v=\sqrt{\frac{\epsilon\mu}{1+\epsilon\mu}}.$ First of all the estimate (\ref{3.8}) implies, that $v\geq u.$ We aim to prove now, that
for $\epsilon>0$ small enough one has the estimate
\begin{equation}
\label{3.9}
\sum_{i=1}^n\left(\sqrt{\frac{\epsilon\lambda_i}{1+\epsilon\lambda_i}}-u\right)^2\geq \sum_{i=1}^n\left(\sqrt{\frac{\epsilon\lambda_i}{1+\epsilon\lambda_i}}-v\right)^2.
\end{equation}
Opening the brackets inequality (\ref{3.9}) amounts to the following
$$2(v-u)\sum_{i=1}^n\sqrt{\frac{\epsilon\lambda_i}{1+\epsilon\lambda_i}}\geq n(v^2-u^2),$$
thus taking into account the estimate $v\geq u,$ we get an equivalent inequality
\begin{equation}
\label{3.10}
2\sum_{i=1}^n\sqrt{\frac{\epsilon\lambda_i}{1+\epsilon\lambda_i}}\geq nu+nv.
\end{equation}
By the definition of $u$ and the geometric-arithmetic mean inequality we have
$$
\sum_{i=1}^n\sqrt{\frac{\epsilon\lambda_i}{1+\epsilon\lambda_i}}\geq nu,
$$
for all $\epsilon>0,$ thus it remains to show, that
\begin{equation}
\label{3.11}
\sum_{i=1}^n\sqrt{\frac{\epsilon\lambda_i}{1+\epsilon\lambda_i}}\geq nv,
\end{equation}
for small enough $\epsilon>0.$ By the smoothness of the mapping $F(x)$ and the positivity of the eigenvalues $\lambda_i(x),$ one has the following uniform estimates
\begin{equation}
\label{3.12}
0<a\leq \lambda_i(x)\leq b<\infty,\quad\text{uniformly in}\quad x\in K, \ \ i=1,2,\dots,n,
\end{equation}
thus $\epsilon\lambda_i(x)\leq \epsilon b\leq\frac{1}{2},$ for all $x\in\overline{K}$ and $i=1,2,\dots,n$ as long as $\epsilon\leq\frac{1}{2b}.$
This shows the validity of (\ref{3.11}) and thus (\ref{3.9}) owing to Lemma~\ref{lem:2.7}. Putting together now (\ref{3.8}) and (\ref{3.9}) we get the estimate
\begin{equation}
\label{3.13}
\frac{1+\epsilon\mu}{(\prod_{i=1}^n(\epsilon\lambda_i+1))^{1/n}}\leq 1-\frac{1}{n}\sum_{i=1}^n\left(\sqrt{\frac{\epsilon\lambda_i}{1+\epsilon\lambda_i}}-\sqrt{\frac{\epsilon\mu}{1+\epsilon\mu}}\right)^2.
\end{equation}
Next we estimate
\begin{align*}
\left(\sqrt{\frac{\epsilon\lambda_i}{1+\epsilon\lambda_i}}-\sqrt{\frac{\epsilon\mu}{1+\epsilon\mu}}\right)^2&=
\frac{\left(\frac{\epsilon\lambda_i}{1+\epsilon\lambda_i}-\frac{\epsilon\mu}{1+\epsilon\mu}\right)^2}
{\left(\sqrt{\frac{\epsilon\lambda_i}{1+\epsilon\lambda_i}}+\sqrt{\frac{\epsilon\mu}{1+\epsilon\mu}}\right)^2}\\
&\geq \frac{\left(\frac{\epsilon\lambda_i}{1+\epsilon\lambda_i}-\frac{\epsilon\mu}{1+\epsilon\mu}\right)^2}
{2\left(\frac{\epsilon\lambda_i}{1+\epsilon\lambda_i}+\frac{\epsilon\mu}{1+\epsilon\mu}\right)}\\
&=\frac{\epsilon^2(\lambda_i-\mu)^2}{2(1+\epsilon\lambda_i)(1+\epsilon\mu)(\epsilon\lambda_i+\epsilon\mu+\epsilon^2\lambda_i\mu)}\\
&\geq\frac{\epsilon(\lambda_i-\mu)^2}{2.1(\lambda_i+\mu)},
\end{align*}
provided $\epsilon$ is small enough. Therefore we get from (\ref{3.13}) the simpler looking estimate
$$
\frac{1+\epsilon\mu}{(\prod_{i=1}^n(\epsilon\lambda_i+1))^{1/n}}\leq 1-\frac{1}{2.1n}\sum_{i=1}^n\frac{\epsilon(\lambda_i-\mu)^2}{\lambda_i+\mu},
$$
which finally implies by the Bernoulli inequality
\begin{align}
\label{3.14}
\prod_{i=1}^n(\epsilon\lambda_i+1)&\geq (1+\epsilon\mu)^n\left(\frac{1}{1-\frac{1}{2.1n}\sum_{i=1}^n\frac{\epsilon(\lambda_i-\mu)^2}{\lambda_i+\mu}}\right)^n\\ \nonumber
&\geq (1+\epsilon\mu)^n\left(1+\frac{1}{2.1n}\sum_{i=1}^n\frac{\epsilon(\lambda_i-\mu)^2}{\lambda_i+\mu}\right)^n\\ \nonumber
&\geq (1+\epsilon\mu)^n\left(1+\frac{1}{2.1}\sum_{i=1}^n\frac{\epsilon(\lambda_i-\mu)^2}{\lambda_i+\mu}\right).
\end{align}
Denote for simplicity $U=\sum_{i=1}^n\frac{\epsilon(\lambda_i-\mu)^2}{\lambda_i+\mu}.$ Then we have combining the estimates (\ref{3.5})
and (\ref{3.14}) that
\begin{equation*}
|K+L_\epsilon|\geq (1+\epsilon\mu)^n\left(|K|+\frac{1}{2.1}\int_K Udx\right),
\end{equation*}
which gives
\begin{equation}
\label{3.15}
|K+L_\epsilon|^{1/n}\geq (|K|^{1/n}+|L_\epsilon|^{1/n})\left(1+\frac{1}{2.1|K|}\int_K Udx\right)^{1/n}.
\end{equation}
Next, owing to the bounds (\ref{3.12}) we can estimate
$$U=\sum_{i=1}^n\frac{\epsilon(\lambda_i-\mu)^2}{\lambda_i+\mu}\leq \frac{2nb^2}{a}\epsilon,$$
thus we have
$$\frac{1}{2.1|K|}\int_K Udx\leq \frac{nb^2}{a}\epsilon\to 0\quad\text{as}\quad\epsilon\to0,$$
and hence we have for small enough $\epsilon$ by the binomial expansion,
$$\left(1+\frac{1}{2.1|K|}\int_K Udx\right)^{1/n}\geq 1+\frac{1}{2.2n|K|}\int_K Udx,$$
which gives together with (\ref{3.15}) the estimate
\begin{equation*}
|K+L_\epsilon|^{1/n}\geq (|K|^{1/n}+|L_\epsilon|^{1/n})\left(1+\frac{1}{2.2n|K|}\int_K Udx\right),
\end{equation*}
which amounts to
\begin{equation}
\label{3.16}
\beta(K,L_\epsilon)\geq \frac{1}{2.2n|K|}\int_K Udx.
\end{equation}
In the next step we recall the following inequality proven by Figalli, Maggi and Pratelli in [\ref{bib:Fig.Mag.Pra.2}],
\begin{equation}
\label{3.17}
A(K,L_\epsilon)\leq \frac{C_0nq_K}{\epsilon\mu|K|}\int_{K}|\nabla F_\epsilon(x)-\epsilon\mu I|dx,
\end{equation}
where $C_0=\frac{2\sqrt 2}{\ln 2}$. For convenience of the reader we present the proof of (\ref{3.17}) from [\ref{bib:Fig.Mag.Pra.2}].
The key estimate needed for proving (\ref{3.17}) is the following trace inequality proven again in [\ref{bib:Fig.Mag.Pra.2}].
\begin{lemma}
\label{lm:3.1}
Let $K\subset\mathbb R^n$ be a convex body such that $B_r\subset K\subset B_R$ for some $0<r<R.$ Then
\begin{equation}
\label{3.18}
\frac{C_0nR}{2r}\int_K|\nabla f(x)|dx\geq \inf_{c\in\mathbb R}\int_{\partial K}|f(x)-c|d\mathcal{H}^{n-1},
\end{equation}
for all $f\in C^\infty(\mathbb R^n)\cap L^\infty(\mathbb R^n).$
\end{lemma}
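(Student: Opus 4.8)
The plan is to reduce the trace inequality \eqref{3.18} to a purely geometric multiplicity estimate over $K$. After a translation assume $B_r$ is centred at the origin, and take the competitor $c=\frac{1}{|B_r|}\int_{B_r}f\,dx$. For every $y\in\partial K$ and $x\in B_r$ the fundamental theorem of calculus along the segment from $x$ to $y$ gives $|f(y)-f(x)|\le\int_0^1|\nabla f((1-t)x+ty)|\,|y-x|\,dt$; averaging over $x\in B_r$ and then integrating over $y\in\partial K$ yields
\[
\int_{\partial K}|f-c|\,d\mathcal{H}^{n-1}\le\frac{1}{|B_r|}\int_{\partial K}\int_{B_r}\int_0^1\big|\nabla f((1-t)x+ty)\big|\,|y-x|\,dt\,dx\,d\mathcal{H}^{n-1}(y).
\]

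Next I would change variables. For fixed $y\in\partial K$ and $t\in(0,1)$, substitute $\xi=(1-t)x+ty$ in the innermost integral: as $x$ ranges over $B_r$ the point $\xi$ ranges over $ty+(1-t)B_r$, the ball of radius $(1-t)r$ centred at $ty$, which lies in $K$ by convexity; the Jacobian is $(1-t)^n$; and $|y-x|=|y-\xi|/(1-t)\le R+r\le 2R$. Hence, by Fubini's theorem, the right-hand side above is at most
\[
\frac{2R}{|B_r|}\int_K\big|\nabla f(\xi)\big|\,m(\xi)\,d\xi,\qquad m(\xi):=\int_0^1\frac{\mathcal{H}^{n-1}\!\big(\partial K\cap B(\xi/t,(1-t)r/t)\big)}{(1-t)^n}\,dt,
\]
where we used that $\xi\in ty+(1-t)B_r$ is the same as $y\in B(\xi/t,(1-t)r/t)$. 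Thus \eqref{3.18} follows once one proves the uniform multiplicity bound $\sup_{\xi\in K}m(\xi)\le\frac{C_0\,n\,|B_1|\,r^{\,n-1}}{4}$; since $n|B_1|r^{\,n-1}=\mathcal{H}^{n-1}(\partial B_r)$ and $C_0/4$ is an absolute constant only slightly larger than $1$, this asks $m(\xi)$ to be at most essentially the surface area of the inscribed sphere, so no room may be wasted.

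For the multiplicity bound I would use two facts about convex bodies. Monotonicity of perimeter: since $\partial K\cap B(z,\rho)\subseteq\partial\big(K\cap\overline{B(z,\rho)}\big)$ and $K\cap\overline{B(z,\rho)}$ is a convex subset of $\overline{B(z,\rho)}$, one has $\mathcal{H}^{n-1}(\partial K\cap B(z,\rho))\le n|B_1|\rho^{n-1}$; trivially also $\mathcal{H}^{n-1}(\partial K\cap B(z,\rho))\le P(K)\le n|B_1|R^{n-1}$ because $K\subseteq B_R$; and when $z\notin K$, with $d=\mathrm{dist}(z,\partial K)$, the intersection lies in a spherical cap of angular radius $\arccos(d/\rho)$, giving the refined bound $\mathcal{H}^{n-1}(\partial K\cap B(z,\rho))\le c(n)\big((\rho^2-d^2)_+\big)^{(n-1)/2}$. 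Combined with the sandwich $B_r\subseteq K\subseteq B_R$ and $|\xi|\le R$, which confine the set of $t$ for which $B(\xi/t,(1-t)r/t)$ meets $\partial K$ and pin the active scale to $(1-t)r/t$, these estimates suffice: on $t$ near $0$ the scale is large and the a priori bound $P(K)\le n|B_1|R^{n-1}$ together with the boundedness of $(1-t)^{-n}$ there makes the contribution harmless; on the middle range both $(1-t)^{-n}$ and $P(K)$ are bounded; and on $t$ near $1$, where $(1-t)^{-n}$ blows up, the refined cap bound absorbs it, since with $\rho=(1-t)r/t$ and $d=\mathrm{dist}(\xi/t,\partial K)$ the factor $\big((\rho^2-d^2)_+\big)^{(n-1)/2}$ vanishes once $d\ge\rho$ and decays like a high power of $(1-t)$ near that threshold, which is exactly what is needed for an $\xi$-uniform estimate. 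Decomposing the interval near $1$ dyadically, so that $1-t$ (hence the scale) runs over a fixed factor-$2$ window on each block, makes the block contributions decay geometrically; summing the series and optimising the dyadic threshold is what produces the sharp constant $C_0=\tfrac{2\sqrt 2}{\ln 2}$.

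The main obstacle is this last estimate. The naive inequality $\mathcal{H}^{n-1}(\partial K\cap B(\xi/t,(1-t)r/t))\le n|B_1|\big((1-t)r/t\big)^{n-1}$, substituted into $m(\xi)$, produces a $t$-integral that diverges at \emph{both} endpoints, so the a priori bound near $t=0$ and the spherical-cap refinement near $t=1$ are both genuinely needed, and one must organise the argument so as to lose neither a dimensional factor nor a logarithm beyond what is advertised. Obtaining exactly $C_0=\tfrac{2\sqrt2}{\ln2}$ — rather than some cruder $C(n)$ — is the delicate quantitative point; this is the content of the lemma as proved in [\ref{bib:Fig.Mag.Pra.2}], and for our purposes only the stated form $\tfrac{C_0nR}{2r}\int_K|\nabla f|\,dx\ge\inf_{c\in\mathbb R}\int_{\partial K}|f-c|\,d\mathcal{H}^{n-1}$ is used in \eqref{3.17} and thereafter.
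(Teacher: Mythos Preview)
The paper does not prove this lemma at all: it is quoted verbatim from Figalli--Maggi--Pratelli [\ref{bib:Fig.Mag.Pra.2}] and used as a black box, so there is no ``paper's own proof'' to compare against. Your task was only to state and invoke it, not to reprove it.

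That said, the sketch you give is not just incomplete but incorrect. The multiplicity bound you need, $\sup_{\xi\in K} m(\xi)\le \frac{C_0 n|B_1|r^{n-1}}{4}$, is false already in the simplest case $K=B_r$ and $\xi=0$. There the ball $B(\xi/t,(1-t)r/t)=B(0,(1-t)r/t)$ contains all of $\partial B_r$ precisely when $t<1/2$ and misses it otherwise, so
\[
m(0)=\int_0^{1/2}\frac{\mathcal{H}^{n-1}(\partial B_r)}{(1-t)^n}\,dt
      =n|B_1|r^{n-1}\cdot\frac{2^{n-1}-1}{n-1},
\]
which exceeds the target $\frac{C_0 n|B_1|r^{n-1}}{4}$ for every $n\ge 3$ and grows exponentially in $n$. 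Your informal claim that ``on the middle range $(1-t)^{-n}$ is bounded'' is exactly what fails: at $t=1/2$ this factor equals $2^n$.

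The underlying issue is structural. Averaging the segment estimate over \emph{all} base points $x\in B_r$ makes segments from many different $(x,y)$ pass through the same interior point $\xi$, and the Jacobian $(1-t)^{-n}$ records this overcounting; nothing in the geometry of $\partial K$ can cancel it. The actual proof in [\ref{bib:Fig.Mag.Pra.2}] avoids this by a radial/one-dimensional reduction rather than an $n$-dimensional average (the $\ln 2$ in $C_0=2\sqrt{2}/\ln 2$ comes from a one-dimensional computation, not from an $n$-dimensional dyadic sum). For the purposes of this paper you should simply cite the lemma, as the author does.
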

The idea of the proof is that inequality (\ref{3.16}) insures, that the sets $\frac{1}{\epsilon\mu }L_\epsilon=\frac{1}{\epsilon\mu }F_\epsilon(K)$ and $K$ are close provided $\beta(K,L)$ is small. The strategy of estimating the measure of the symmetric difference $\left(\frac{1}{\epsilon\mu }L_\epsilon\right)\triangle K$ is the following: given a point $x$ on the boundary of the set $K$, one projects it onto the set $L'=\frac{1}{\mu }L$ and integrates the obtained distance over the boundary of $L'.$ Namely, Figalli, Maggi and Pratelli do the following calculation: Denote by $P(x)\colon\mathbb R^n\setminus L'\to\partial L'$ the projection onto the set $L',$ then as $F(x)$ takes values in $L,$ one gets the estimate
\begin{equation}
\label{3.19}
\frac{1}{|K|}\int_{\partial K}\left|\frac{F(x)}{\mu}- x\right|d\mathcal{H}^{n-1}\geq \frac{1}{|K|}\int_{\partial K\L'}|P(x)-x|d\mathcal{H}^{n-1}
\end{equation}
Consider now the map $\Phi(x,t)\colon(\partial K\setminus L')\times[0,1]\to K\setminus L'$ defined by
$$\Phi(x,t)=tx+(1-t)P(x).$$
It is clear, that as $\Phi(x,t)$ lies on the segment joining the points $x$ and $P(x),$ then $\Phi(x,t)$ is a bijection. Let now
$\{\epsilon_k(x)\}_{k=1}^{n-1}$ be a basis of the tangent space to $\partial K$ at $x.$ Since $\Phi$ is a bijection on has
$$|K\setminus L'|=\int_0^1dt\int_{\partial K\setminus L'}\left|(x-P(x))\wedge \left(\bigwedge_{k=1}^n(t\epsilon_k(x)+(1-t)dP_x(\epsilon_k(x)))\right)\right|d\mathcal{H}^{n-1}.$$
Since $P(x)$ is a projection onto a convex set, it decreases the distances [\ref{bib:Sch.}], thus $|dP_x|\leq 1,$
thus we have $|t\epsilon_k(x)+(1-t)dP_x(\epsilon_k(x))|\leq 1$ for all $k$ and $x.$
Therefore one gets from the last equality, that
$$\frac{|K\setminus L'|}{|K|}\leq \frac{1}{|K|}\int_{\partial K\setminus L'}|P(x)-x|d\mathcal{H}^{n-1},$$
which together with (\ref{3.19}) implies
\begin{equation}
\label{3.20}
\frac{1}{|K|}\int_{\partial K}\left|\frac{F(x)}{\mu}- x\right|d\mathcal{H}^{n-1}\geq \frac{|K\setminus L'|}{|K|}.
\end{equation}
One can have assumed initially, that the set $L'$ is translated by a vector $c\in\mathbb R^n$ so that
$$\int_{\partial K}\left|\frac{F(x)}{\mu}- x\right|d\mathcal{H}^{n-1}=\inf_{c\in\mathbb R^n}\int_{\partial K}\left|\frac{F(x)}{\mu}- x-c\right|d\mathcal{H}^{n-1}.$$

Thus finally noticing, that $A(K,L)\leq \frac{|K\triangle L'|}{|K|}=2\frac{|K\setminus L'|}{|K|}$ and applying Lemma~\ref{lm:3.1},
the estimate (\ref{3.17}) follows from (\ref{3.20}). The rest of the analysis is to derive the estimate (\ref{3.0.1}) from (\ref{3.16}) and
 (\ref{3.17}). To that end we denote
\begin{equation}
\label{3.21}
V=|\nabla F-\mu I|=\left(\sum_{i=1}^n(\lambda_i-\mu)^2\right)^{1/2},\quad W=\sum_{i=1}^n(\lambda_i+\mu).
\end{equation}
We have on one hand by the Schwartz inequality, that
\begin{align*}
V&=\left(\sum_{i=1}^n(\lambda_i-\mu)^2\right)^{1/2}\\ \nonumber
&\geq\frac{1}{\sqrt{n}}\sum_{i=1}^n|\lambda_i-\mu|\\ \nonumber
&\geq\frac{1}{\sqrt{n}}\sum_{i=1}^n(\lambda_i-\mu)\\ \nonumber
&=\frac{1}{\sqrt{n}}(W-2n\mu),
 \end{align*}
 thus we get
\begin{equation}
\label{3.22}
W\leq \sqrt{n}V+2n\mu.
\end{equation}
We have on the other hand again by the Schwartz inequality and utilizing (\ref{3.16}), that
\begin{align*}
2.2n|K|\beta(K,L_\epsilon)\int_K(\sqrt{n}V+2n\mu)dx&\geq \int_{K}Udx\int_KWdx\\
&\geq \left(\int_{K}\sqrt{UW}dx\right)^2\\
&\geq \epsilon\left(\int_{K}\sum_{i=1}^n|\lambda_i-\mu|dx\right)^2\\
&\geq \epsilon\left(\int_{K}Vdx\right)^2,
\end{align*}
which then gives the estimate
\begin{equation}
\label{3.23}
\frac{1}{|K|}\int_K Vdx\leq \frac{2.2n\sqrt n\beta(K,L_\epsilon)}{\epsilon}+\sqrt{\frac{4.2n^2\mu \beta(K,L_\epsilon)}{\epsilon}}.
\end{equation}
Recall, that we are after the estimate (\ref{3.0.1}), which is equivalent to
\begin{equation}
\label{3.24}
A(K,L_\epsilon)^2\leq \frac{100n^4q_K^2\beta(K,L_\epsilon)}{\mu\epsilon}.
\end{equation}
It is clear, that $A(K,L_\epsilon)\leq 2,$ thus we can without loss of generality assume, that
$$\frac{100n^4q_K^2\beta(K,L_\epsilon)}{\mu\epsilon}<4,$$
thus taking into account the bound $q_K\geq 1$ and $n\geq 2,$ we get the estimate
\begin{equation}
\label{3.25}
\frac{\beta(K,L_\epsilon)}{\mu\epsilon}<\frac{1}{25n^4}\leq\frac{1}{200n}.
\end{equation}
It is then easy to see, that (\ref{3.25}) implies, that
$$\frac{2.1n\sqrt n\beta(K,L_\epsilon)}{\epsilon}\leq \frac{1}{10}\sqrt{\frac{4.2n^2\mu \beta(K,L_\epsilon)}{\epsilon}},$$
and hence we discover from (\ref{3.23})
\begin{equation}
\label{3.26}
\frac{1}{|K|}\int_K Vdx\leq \frac{11}{10}\sqrt{\frac{4.2n^2\mu \beta(K,L_\epsilon)}{\epsilon}}.
\end{equation}
Finally, combining now the estimates (\ref{3.17}) and (\ref{3.24}) we arrive at
\begin{equation}
\label{3.27}
A^2(K,L_\epsilon)\leq \frac{121\cdot 4.2C_0^2n^4q_K^2}{100\epsilon\mu}\beta(K,L_\epsilon),
\end{equation}
which yields (\ref{3.0.1}). The proof of the lemma is finished now.
\end{proof}

It is  a well known procedure how the proof of Theorem~\ref{th:1.1} easily follows now from the estimate (\ref{3.0.1}), one just lets $\epsilon$ go to zero in (\ref{3.0.1}). Indeed, from the estimate
$$|K+L_\epsilon|\geq \left(|K|^{1/n}+\epsilon |L|^{1/n}\right)^n\left(1+\frac{\epsilon A(K,L)^2|L|^{1/n}}{C(K,n)|K|^{1/n}}\right),$$
we get by the well known (Minkowski-Steiner formula for the case of $L=B_1$) formula [\ref{bib:Fed.}], that at the first order (as $\epsilon\to 0$),
\begin{align*}
|K|+\epsilon P_L(K)&\geq (|K|+\epsilon n|K|^{(n-1)/n}|L|^{1/n})\left(1+\frac{\epsilon A(K)^2|L|^{1/n}}{C(K,n)|K|^{1/n}}\right)^n\\
&\geq (|K|+\epsilon n|K|^{(n-1)/n}|L|^{1/n})\left(1+\frac{\epsilon nA(K)^2|L|^{1/n}}{C(K,n)|K|^{1/n}}\right),
\end{align*}
which is exactly (\ref{1.11}). The proof of the theorem is finished now.
\end{proof}

\begin{proof}[Proof of Theorem~\ref{th:1.3}] The derivation of a Brunn-Minkowski for convex sets from an anisotropic isoperimetric inequality is again classical and is due to Hadwiger and Ohmann [\ref{bib:Had.Ohm.}]. Again, for convenience of the reader we present the proof here.
From the definition of the anisotropic perimeter, it is clear that
\begin{equation}
\label{3.29}
P_K(M)+P_L(M)=P_{K+L}(M),
\end{equation}
for all convex sets $K,L,M\in\mathbb R^n.$ Another trivial and classical fact is the triangle inequality
\begin{equation}
\label{3.30}
A(K,L)\leq A(K,M)+A(M,L).
\end{equation}
We can assume without loss of generality, that the origin is an inner point for both of the sets $K$ and $L$ and it is also clear that the set $M=K+L$ is convex too. Thus we have by Corollary~\ref{cor:1.2}, that
\begin{equation*}
P_K(K+L)\geq n|K+L|^{(n-1)/n}|K|^{1/n}\left(1+\frac{A(K+L,K)^2}{C(n)}\right),
\end{equation*}
and
\begin{equation*}
P_L(K+L)\geq n|K+L|^{(n-1)/n}|L|^{1/n}\left(1+\frac{A(K+L,L)^2}{C(n)}\right),
\end{equation*}
thus summing the two estimates and owing to (\ref{3.29}) we get
\begin{align}
\label{3.31}
\frac{|K+L|^{1/n}}{|K|^{1/n}+|L|^{1/n}}-1&\geq \frac{1}{C(n)}\left(\frac{|K|^{1/n}}{|K|^{1/n}+|L|^{1/n}}A(K+L,K)^2+\frac{|L|^{1/n}}{|K|^{1/n}+|L|^{1/n}}A(K+L,L)^2\right)\\ \nonumber
&\geq \frac{1}{C(n)}\left(\frac{\sigma^{1/n}}{1+\sigma^{1/n}}A(K+L,K)^2+\frac{1}{1+\sigma^{1/n}}A(K+L,L)^2\right)\\ \nonumber
&\geq \frac{1}{C(n)}\left(\frac{1}{2\sigma^{1/n}}A(K+L,K)^2+\frac{1}{2\sigma^{1/n}}A(K+L,L)^2\right),
\end{align}
as $\sigma\geq 1.$ An application of the inequality $a^2+b^2\geq \frac{1}{2}(a+b)^2$ and due to the triangle inequality we get
\begin{equation*}
\label{3.32}
\frac{|K+L|^{1/n}}{|K|^{1/n}+|L|^{1/n}}-1\geq \frac{A(K,L)^2}{4C(n)\sigma^{1/n}},
\end{equation*}
which completes the proof of the theorem.
\end{proof}
Finally, we comment on the Conjecture~\ref{con:1.4}. As we have already seen, Brunn-Minkowski inequality implies the anisotropic
isoperimetric inequality with the same constant $C(n),$ and the anisotropic isoperimetric inequality with a constant $C(n)$ implies the Brunn-Minkowski inequality with a constant $4C(n),$ thus it suffices to prove that $C(n)\geq Cn^2$ for some constant $C$ in the Brunn-Minkowski
inequality. To that end we consider the two boxes $K=[0,1]^n$ and $L=[0,1]^m\times[0,1+\epsilon]^{n-m},$ where $n\geq 2,$ $m=\left[n/2\right]$ is the whole part of $n/2,$ and $\epsilon$ is a small number. We have denoting $\alpha=(n-m)/n,$ by Taylor's formula that
\begin{align}
\label{3.33}
\beta(K,L)&=\frac{|K+L|^{1/n}-|K|^{1/n}-|L|^{1/n}}{|K|^{1/n}+|L|^{1/n}}\\ \nonumber
&\leq \frac{1}{2}(|K+L|^{1/n}-|K|^{1/n}-|L|^{1/n})\\ \nonumber
&=\frac{1}{2}\left(2\left(1+\frac{\epsilon}{2}\right)^{\alpha}-1-(1+\epsilon)^{\alpha}\right)\\ \nonumber
&=\frac{1}{2}\left(2\left(1+\frac{\alpha\epsilon}{2}+\frac{\alpha(\alpha-1)\epsilon^2}{8}+O(\epsilon^3)\right)-1-\left(1+\alpha\epsilon+
\frac{\alpha(\alpha-1)\epsilon^2}{2}+O(\epsilon^3)\right)\right)\\ \nonumber
&=\frac{\alpha(1-\alpha)\epsilon^2}{4}+O(\epsilon^3)\\ \nonumber
&\leq \frac{\epsilon^2}{16}+O(\epsilon^3).
\end{align}
On the other hand by the construction of the boxes $K$ and $L$ it is clear, that
\begin{equation}\label{3.34}
A(K,L)\geq cn\epsilon ,
\end{equation}
for some constant $c>0.$ Thus combining inequalities (\ref{3.33}), (\ref{3.34}) and (\ref{1.6}), and sending $\epsilon$ to zero we arrive at the estimate $C(n)\geq Cn^2$ for some absolute constant $C>0.$ This insures, that the exponent $2$ of $n$ in the constant $C(n)$ is not possible to
make any lower. The reverse inequality for the optimal constant $C(n)\leq Cn^2$ is a task for future. It is also worth mentioning, that Segal showed in [\ref{bib:Seg.}], that if one assumes that the validity of Dar's conjecture implies the estimate $C(n)\leq Cn^2$, i.e., proves Conjecture~\ref{con:1.4}. Recall, that Dar's conjecture [\ref{bib:Dar}] asserts the following: \textit{For any bounded convex bodies, the inequality holds}
$$|K+L|^{1/n}\geq M(K,L)^{1/n}+\frac{|K|^{1/n}|L|^{1/n}}{M(K,L)^{1/n}},$$
where
$$M(K,L)=\max_{x\in\mathbb R^n}|K\cap(L+x)|.$$
Note, that Dar's conjecture has been proven recently by Xi and Leng [\ref{bib:Xi.Len.}] in the planar case $n=2.$
\begin{remark}
\label{rem:3.4}
As mentioned in the introduction section, in the general case when the sets $X,Y\subset\mathbb R^n$ are just bounded and measurable, then the validity of (\ref{1.6}) is open. The convex body proof approach does not work in the general case due to many facts in particular the luck of John's symmetrization and the passage from Wullf's inequality to the Brunn-Minkowski.
\end{remark}

\section{Acknowledgement}

The present results have been obtained while the author was a postdoctoral fellow at the University of Utah. The author is very grateful to
Graeme W. Milton for supporting his stay at University of Utah. The author is also grateful for the anonymous referees to point out papers [10] and [34] and also for valuable comments that improved the presentation and the text of the manuscript.

\end{document}